\theoremstyle{plain}
\numberwithin{equation}{section}
\newtheorem{theorem}{Theorem}[section]
\newtheorem{proposition}[theorem]{Proposition}
\newtheorem{lemma}[theorem]{Lemma}
\newtheorem{remark}[theorem]{Remark}
\newtheorem{remarks}[theorem]{Remark}
\newtheorem{definition}[theorem]{Definition}
\newcommand{\be}{\begin{equation}}
\newcommand{\ee}{\end{equation}}
\newcommand{\R}{\mathbb R}
\newcommand{\C}{\mathbb C}
\newcommand{\Z}{\mathbb Z}
\newcommand{\N}{\mathbb N}
\newcommand{\T}{\mathbb T}
\newcommand{\s }{\sigma }
\newcommand{\ii }{{\rm i} }
\newcommand{\x }{\xi }
\newcommand{\opw}{{Op^{\mathrm{W}}}}
\newcommand{\opbw}{{Op^{{\scriptscriptstyle{\mathrm BW}}}}}
\def\hat{\widehat}
\def\bar{\overline}
\def\ba{\begin{aligned}}
\def\ea{\end{aligned}}
\def\beginm{\begin{multline}}
\def\endm{\end{multline}}
\def\l@subsection{\@tocline{2}{0pt}{2.5pc}{5pc}{}}
\def\l@subsubsection{\@tocline{3}{0pt}{4.5pc}{5pc}{}}
\renewcommand\tocchapter[3]{%
  \indentlabel{\@ifnotempty{#2}{\ignorespaces#2.\quad}}#3%
}
\newcommand\@dotsep{4.5}
\def\@tocline#1#2#3#4#5#6#7{\relax
  \ifnum #1>\c@tocdepth 
  \else
    \par \addpenalty\@secpenalty\addvspace{#2}%
    \begingroup \hyphenpenalty\@M
    \@ifempty{#4}{%
      \@tempdima\csname r@tocindent\number#1\endcsname\relax
    }{%
      \@tempdima#4\relax
    }%
    \parindent\z@ \leftskip#3\relax \advance\leftskip\@tempdima\relax
    \rightskip\@pnumwidth plus1em \parfillskip-\@pnumwidth
    #5\leavevmode\hskip-\@tempdima{#6}\nobreak
    \leaders\hbox{$\m@th\mkern \@dotsep mu\hbox{.}\mkern \@dotsep mu$}\hfill
    \nobreak
    \hbox to\@pnumwidth{\@tocpagenum{#7}}\par
    \nobreak
    \endgroup
  \fi}
\def\l@subsection{\@tocline{2}{0pt}{2.5pc}{5pc}{}}
\begin{document}

\title{On the Cauchy problem for  quasi-linear Hamiltonian KdV-type equations.}

\author{Felice Iandoli}
\address{\scriptsize{Laboratoire Jacques-Louis Lions, Sorbonne Universit\'e, UMR CNRS 7598\\
4, Place Jussieu\\
 75005 Paris Cedex 05, France}}
\email{felice.iandoli@sorbonne-universite.fr}




\thanks{The author has been supported by ERC grant ANADEL 757996.}

\begin{abstract}
We prove local in time well-posedness for a class of quasilinear Hamiltonian KdV-type equations with periodic boundary conditions, more precisely we show existence, uniqueness and continuity of the solution map. We improve the previous result in \cite{Mietka}, generalising the considered class of  equations and improving the regularity assumption on the initial data.
\end{abstract}


\maketitle

\tableofcontents

\section{Introduction}
In this paper 
$u(t,x)$ is a function of time $t\in [0,T)$, $T>0$ and space $x\in \mathbb{T}:=\mathbb{R}/2\pi\mathbb{Z}$. $F(x,z_0,z_1)$ is a polynomial function such that $F(x,0,z_1)=F(x,z_0,0)=\partial_{z_0}F(x,0,z_1)=\partial_{z_1}F(x,z_0,0)=0$. Throughout the paper we shall assume that there exists a constant $\mathfrak{c}>0$ such that
\begin{equation}\label{ellit}
\partial_{z_1z_1}^2 F(x,z_0,z_1)\geq \mathfrak{c},
\end{equation}
for any $x\in\T$, $z_0,$ $z_1\in\R$.
We shall denote the partial derivatives of the function $u$ by $u_t, u_x, u_{xx}$ and  $ u_{xxx}$, by $\partial_x, \partial_{z_0}, \partial_{z_1}$ the partial derivatives of the function $F$ and by $\frac{d}{dx}$ the total derivative with respect to the variable $x$. For instance we have $\frac{d}{dx}F(x,u,u_{x})=\partial_{x}F(x,u,u_x)+\partial_{z_0}F(x,u,u_x)u_x+\partial_{z_1}F(x,u,u_x)u_{xx}.$ We consider the equation
\begin{equation}\label{KdV-astratta}
u_t=\frac{d}{dx}\Big(\nabla_uH(x,u,u_x)\Big), \quad H(x,u,u_x):=\int_{\T}F(x,u,u_x)dx,
\end{equation}
where we denoted  by $\nabla_{u}H$ the $L^2$-gradient of the  Hamiltonian function $H(x,u,u_x)$ on the phase space
\begin{equation}
H_0^s(\T):=\{u(x)\in H^s(\T):\int_{\T}u(x)dx=0\}
\end{equation}
endowed with the non-degenerate symplectic form $\Omega(u,v):=\int_{{\T}}(\partial_x^{-1}u)vdx$ ($\partial_x^{-1}$ is the periodic primitive of $u$ with zero average) and with the norm $\|u\|_{H^s}:=\sum_{j\in\Z^*}|u_j|^{2}|j|^2$ ($u_j$ are the Fourier coefficients of the periodic function $u$).
The main result is the following.
\begin{theorem}\label{totale}
Let $s>4+1/2$ and assume \eqref{ellit}. Then for any $u_0\in H^s_0(\T)$ there exists a time $T:=T(\|u_0\|_{H^s})$ and a unique solution of \eqref{KdV-astratta} with initial condition $u(0,x)=u_0(x)$ satisfying $u(t,x)\in C^0([0,T),H_0^s(\T))\cap C^1([0,T),H_0^{s-3}(\T)).$ Moreover the solution map $u_0(x)\mapsto u(t,x)$ is continuous with respect to the $H^s_0$ topology for any $t$ in $[0,T)$.
\end{theorem}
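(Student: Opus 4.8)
The plan is to follow the paradifferential scheme for quasi-linear evolution equations: paralinearize, reduce to constant coefficients at the top orders, run energy estimates on a symmetrized unknown to get a priori bounds, and then obtain existence, uniqueness and continuity of the flow by a regularization argument together with a Bona--Smith argument. \emph{Step 1 (paralinearization and structure).} Since $\nabla_uH(x,u,u_x)=\partial_{z_0}F(x,u,u_x)-\pa_x\big(\partial_{z_1}F(x,u,u_x)\big)$, equation \eqref{KdV-astratta} reads $u_t=\pa_x\big(\partial_{z_0}F(x,u,u_x)\big)-\pa_x^2\big(\partial_{z_1}F(x,u,u_x)\big)$, whose leading term is $-\big(\partial_{z_1z_1}F(x,u,u_x)\big)u_{xxx}$. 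Using Bony's paralinearization of the compositions with the polynomial $F$ and its derivatives, one writes
\[
u_t=\pa_x\,\opbw\!\big(\ta(u;x)\,\x^2+\mathtt{c}(u;x)\big)u+\Rc(u)u ,
\]
with $\ta(u;x):=\partial_{z_1z_1}F(x,u,u_x)\ge\mathfrak{c}>0$ by \eqref{ellit}, $\mathtt{c}(u;x):=\partial_{z_0z_0}F(x,u,u_x)$, both real, and $\Rc(u)$ smoothing with tame estimates in $u$. The point encoding the Hamiltonian structure is that $A(u):=\opbw\big(\ta(u;x)\x^2+\mathtt{c}(u;x)\big)$ is self-adjoint up to a smoothing operator and, on $H^s_0(\T)$, elliptic and positive of order two; indeed $A(u)=-\pa_x\big(\ta(u;x)\pa_x\cdot\big)+(\text{order }0)$, so the order-two coefficient of $\pa_xA(u)$ is the total derivative $-2\pa_x\ta(u;x)$, of zero average on $\T$.

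\emph{Step 2 (reduction to constant coefficients --- the main obstacle).} This is the heart of the argument. First conjugate the equation by a $u$-dependent torus diffeomorphism $x\mapsto x+\beta(u;t,x)$ chosen so that $\ta(u;\cdot)$ becomes a constant $m_3(u;t)\ge\mathfrak{c}>0$; condition \eqref{ellit} is used precisely to guarantee that $\ta(u;\cdot)^{-1/3}$, hence the diffeomorphism, is well defined. This creates an order-two term whose coefficient is again a total derivative (the divergence structure of Step 1 being preserved by the symplectic change of variables), hence of zero average, so a second conjugation by a bounded invertible para-differential operator of order zero removes it. One is left with
\[
\pa_t v=m_3(u;t)\,\pa_x^3v+\opbw\!\big(b_1(u;t,x)\,\x\big)v+\mathcal{Q}(u)v ,
\]
with $b_1$ real and $\mathcal{Q}(u)$ of order $0$ modulo smoothing, all coefficients and remainders depending tamely on $\|u\|_{H^s}$. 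The delicate bookkeeping is to check that the loss of derivatives is confined to orders three and two, so that it is genuinely cancelled by the two conjugations, and that the surviving symbols stay real and Lipschitz; applied to the coefficients coming out of the double $x$-differentiation in $\pa_x^2(\partial_{z_1}F)$ and of the change of variables, this is what forces the threshold $s>4+\tfrac12$.

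\emph{Step 3 (energy estimates and a priori bounds).} I then estimate $\tfrac{d}{dt}\|v(t)\|_{H^s}^2$ for the unknown of Step 2. The operator $m_3(u;t)\pa_x^3$ is skew-adjoint on real $L^2(\T)$ and commutes with $\la D\ra^s$, so it gives no contribution; conjugating $\opbw(b_1\,\x)$ by $\la D\ra^s$ returns the same operator up to a bounded one, whose self-adjoint part is bounded because $b_1$ is real; and $\mathcal{Q}(u)$ is bounded. Combining this with the paralinearization estimates of Step 1 and the norm equivalence $\|v\|_{H^s}\sim_{\|u\|_{H^s}}\|u\|_{H^s}$ produced by Steps 1--2, one gets
\[
\tfrac{d}{dt}\|u(t)\|_{H^s}^2\le C\big(\|u(t)\|_{H^s}\big)\,\|u(t)\|_{H^s}^2 ,
\]
hence an a priori bound on $[0,T]$ with $T=T(\|u_0\|_{H^s})$, and $u_t\in H^{s-3}_0(\T)$ directly from \eqref{KdV-astratta}.

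\emph{Step 4 (existence, uniqueness, continuity of the flow).} For existence I regularize \eqref{KdV-astratta} by adding the dissipative term $-\eps\,\pa_x^4u$ (or by Friedrichs mollification), solve the resulting semilinear parabolic problem, note that the energy estimate of Step 3 is uniform in $\eps$ since the regularizing term has the favourable sign, and pass to the limit $\eps\to0$ by compactness, obtaining $u\in L^\infty([0,T],H^s_0)$ solving \eqref{KdV-astratta}. Weak-$*$ continuity in $H^s$, together with continuity of $t\mapsto\|u(t)\|_{H^s}$ from the energy identity --- first from the right, then from the left by solving the equation backward in time, which is likewise well posed because the third-order coefficient has definite sign --- upgrades this to $u\in C^0([0,T],H^s_0)$, whence $u\in C^1([0,T],H^{s-3}_0)$ by the equation. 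Uniqueness follows by running the energy estimate for the difference of two solutions at the lower level $H^{s-3}_0$, which absorbs the loss from the differences of the quasi-linear coefficients, together with Gronwall. Finally, continuity of $u_0\mapsto u(t,\cdot)$ in the $H^s_0$ topology is obtained by a Bona--Smith argument: truncate the datum in frequency, use the uniform $H^s$ bounds and the Lipschitz estimate in $H^{s-3}_0$, combine with a refined energy identity showing $\|u_n(t)\|_{H^s}\to\|u(t)\|_{H^s}$ uniformly on $[0,T]$, and conclude by interpolation.
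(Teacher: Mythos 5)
Your overall architecture (paralinearize, exploit the Hamiltonian structure to remove the non-skew-adjoint order-two term, energy estimates on a symmetrized unknown, regularize, Bona--Smith) is sound and reaches the same threshold $s>4+\tfrac12$, but at the two key technical steps you take a genuinely different route from the paper. For the reduction, the paper never straightens the coefficients: it keeps the variable-coefficient principal symbol $\partial^2_{z_1z_1}F\,(\ii\xi)^3$ and instead (i) removes the order-two symbol $\tfrac12\tfrac{d}{dx}(\partial^2_{z_1z_1}F)(\ii\xi)^2$ by the explicit zero-order conjugation $\opbw(\mathtt{d})$, $\mathtt{d}=(\partial^2_{z_1z_1}F)^{1/6}$ (Lemma \ref{key} -- no $\partial_x^{-1}$ and no zero-average discussion is needed, because this coefficient is exactly proportional to $\tfrac{d}{dx}$ of the leading one), and (ii) builds the modified energy \eqref{energia-modi} with the weight $(\partial^2_{z_1z_1}F)^{\frac23\sigma}|\xi|^{2\sigma}$, chosen so that its Poisson bracket with the principal symbol vanishes identically; the commutator in the energy estimate is then bounded without ever making the third-order coefficient constant. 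Likewise, existence is obtained via an iterative scheme of \emph{linear} paradifferential problems, each solved by cutting off the symbol with $\chi(\epsilon\,\partial^2_{z_1z_1}F\,\xi^3)$ -- a cutoff shaped on the principal symbol precisely so that the key cancellation survives uniformly in $\epsilon$ -- rather than by adding $-\epsilon\partial_x^4u$ to the nonlinear equation. Your route (a time-dependent torus diffeomorphism plus a zero-order conjugation, then constant-coefficient energy estimates, then vanishing viscosity) is in the spirit of \cite{FI1,FI2} and of quasi-linear KdV reducibility arguments, and can be made to work, but it is heavier: it needs tame estimates for composition operators and control of $\beta_t$ (hence of $\partial_t u$) coming from the time dependence of the diffeomorphism.

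Two points in your outline are asserted rather than proved and are exactly where the work lies. First, the removal of the order-two term after straightening requires that its coefficient have \emph{zero average} on $\T$ (a periodic zero-order conjugation solves $(\log p)_x=b_2/(3m_3)$ only then; a nonzero mean would cost one derivative in the energy estimate). You attribute this to the change of variables being symplectic, but the plain substitution $u\mapsto u(x+\beta)$ is not symplectic for $\Omega(u,v)=\int_\T(\partial_x^{-1}u)v\,dx$; one must use the corrected map $u\mapsto(1+\beta_x)\,u(x+\beta)$ and actually recompute the paralinearized equation in the new variable to verify the divergence structure of the order-two coefficient -- this is the analogue of what the paper gets for free in Lemma \ref{key}. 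Second, the uniformity in $\epsilon$ of the a priori bound for the parabolic regularization must be checked after the conjugations (the cross terms between $-\epsilon\partial_x^4$ and the variable-coefficient transformations are of order three with an $\epsilon$ factor and have to be absorbed by the good $\epsilon$-term); the paper's symbol-level cutoff avoids this issue by construction. Neither point is a wrong turn, but both need genuine proofs before your scheme closes.
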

This theorem improves the previous one in \cite{Mietka} by Mietka. The result in such a paper holds true if the Hamiltonian function has the form $H(u)$, while here we allow the explicit dependence on the $x$ variable (non autonomous equation) and the dependence on $u_x$. We tried to optimise our result in terms of regularity of the initial condition, we do not  know if the result is improvable. If we apply our method to the equation considered by Mietka, we find a local well-posedness theorem if the initial condition belongs to the space $H_0^s$ with $s>3+1/2$ (which is natural since the nonlinearity may contain up to three derivatives of $u$), while in \cite{Mietka} one requires $s\geq4$. In our statement we need  $s>4+1/2$ because  our equation is more general and we have the presence of one more derivative in the coefficients with respect to the equation considered in \cite{Mietka}.\\
The proof of Theorem \ref{totale} is an application of a method which has been developed in \cite{FI1, FI2} and then improved, in terms of regularity of initial condition, in \cite{BMM1}. Here we follow closely the method in \cite{BMM1} and we use several results proven therein.
Both the schemes, the one used in \cite{Mietka} and in \cite{FI1,FI2,BMM1}, rely on solely energy method, the second one is slightly more refined because of the use of para-differential calculus which allows us to work in fractional Sobolev spaces and to treat more general nonlinear terms. The main idea is to introduce a convenient energy, which is equivalent to the Sobolev norm,  which commutes with the principal (quasi-linear)  term in the equation (see \eqref{energia-modi}). 
In \cite{FI1,FI2,BMM1} the main difficulty comes from the fact that, after the paralinearization, one needs to prove \emph{a priori} estimates on a system of coupled equations. One needs then to decouple the equations through convenient changes of coordinates which are used to define the modified energy. In the case of KdV equation \eqref{KdV-astratta}, we have a scalar equation with the sub-principal symbol which is \emph{real} (and so it defines a selfadjoint operator), see \eqref{KdVpara}, therefore it is impossible to obtain energy estimates directly. This term may be completely removed (see Lemma \ref{key}) thanks to the Hamiltonian structure.
For similar constructions of such kind of energies one can look also at \cite{IP, FI2, BMM1, FGI20,FIM}.\\
The general equation \eqref{KdV-astratta} contains the ``classical" KdV equation $u_t+uu_x+u_{xxx}=0$ and the \emph{modified} KdV $u_t+u^pu_x+u_{xxx}=0$, $p\geq 2$. Obviously, for the last two equations better results may be obtained, concerning KdV we quote Bona-Smith \cite{BSkdv}, Kato \cite{kato}, Bourgain \cite{Bourgain}, Kenig-Ponce-Vega \cite{KPV17,KPV19}, Christ-Colliander-Tao \cite{CCT}. For the general equation, as the one considered in this paper here, several results have been proven by Colliander-Keel-Staffilani-Takaoka-Tao \cite{CKSTT},  Kenig-Ponce-Vega \cite{KPV18} and the aforementioned Mietka \cite{Mietka}.\\

\section{Paradifferential calculus}
In this section we recall some results concerning the para-differential calculus, we follow \cite{BMM1}. We introduce the Japanese bracket $\langle{\xi}\rangle=\sqrt{1+\xi^2}.$
\begin{definition}
Given $m,s\in\mathbb{R}$ we denote by $\Gamma^m_s$ the space of functions $a(x,\xi)$ defined on $\T\times \R$ with values in $\C$, which are $C^{\infty}$ with respect to the variable $\xi\in\R$ and such that for any $\beta\in \N\cup\{0\}$,
there exists a constant $C_{\beta}>0$ such that 
\begin{equation}\label{stima-simbolo}
\|\partial_{\xi}^{\beta} a(\cdot,\xi)\|_{H_0^s}\leq C_{\beta}\langle\xi\rangle^{m-\beta}, \quad \forall \xi\in\R.
\end{equation}
\end{definition}
We endow the space $\Gamma^m_{s}$ with the family of norms 
\begin{equation}\label{seminorme}
|a|_{m,s,n}:=\max_{\beta\leq n} \sup_{\xi\in\R}\|\langle\xi\rangle^{\beta-m}a(\cdot,\xi)\|_{H^s_0}.
\end{equation}
Analogously for a given Banach space $W$ we denote by $\Gamma^m_{W}$ the space of functions which verify the \eqref{stima-simbolo} with the $W$-norm instead of $H^s_0$, we also denote by $|a|_{m,W,n}$ the $W$ based seminorms \eqref{seminorme} with $H^s_0\rightsquigarrow W$.\\
We say that a symbol $a(x,\xi)$ is spectrally localised if there exists $\delta>0$ such that $\hat{a}(j,\xi)=0$ for any $|j|\geq \delta \langle\xi\rangle.$\\
Consider a function $\chi\in C^{\infty}(\R,[0,1])$ such that $\chi(\xi)=1$ if $|\xi|\leq 1.1$ and $\chi(\xi)=0$ if $|\xi|\geq 1.9$. Let $\epsilon\in(0,1)$ and define moreover $\chi_{\epsilon}(\xi):=\chi(\xi/\epsilon).$ Given $a(x,\xi)$ in $\Gamma^m_s$ we define the regularised symbol
\begin{equation*}
a_{\chi}(x,\xi):=\sum_{j\in \Z^*}\hat{a}(j,\xi)\chi_{\epsilon}(\tfrac{j}{\langle\xi\rangle})e^{\ii jx}.
\end{equation*}
 For a symbol $a(x,\x)$ in $\Gamma^m_s$
we define its Weyl and Bony-Weyl
quantization as 
\begin{equation}\label{quantiWeylcl}
\opw(a(x,\xi))h:=\frac{1}{(2\pi)}\sum_{j\in \mathbb{Z}^*}e^{\ii j x}
\sum_{k\in\mathbb{Z}^*}
\hat{a}\big(j-k,\frac{j+k}{2}\big)\widehat{h}(k),
\end{equation}
\begin{equation}\label{quantiWeyl}
\opbw(a(x,\xi))h:=\frac{1}{(2\pi)}\sum_{j\in \mathbb{Z}^*}e^{\ii j x}
\sum_{k\in\mathbb{Z}^*}
\chi_{\epsilon}\Big(\frac{|j-k|}{\langle j+k\rangle}\Big)
\hat{a}\big(j-k,\frac{j+k}{2}\big)\widehat{h}(k).
\end{equation}
We list below a series of theorems and lemmas that will be used in the paper. All the statements have been taken from \cite{BMM1}. The first one is a result concerning the action of a paradifferential operator  on Sobolev spaces. This is Theorem 2.4 in \cite{BMM1}.
\begin{theorem}\label{azione}
Let $a\in\Gamma^m_{s_0}$, $s_0>1/2$ and $m\in\R$. Then $\opbw(a)$ extends as a bounded operator from $H^{s-m}_0(\T)$ to $H^s_0(\T)$ for any $s\in\R$ with estimate 
\begin{equation}\label{ac1}
\|\opbw(a)u\|_{H^{s-m}_0}\lesssim |a|_{m,s_0,4}\|u\|_{H^s_0},
\end{equation}
for any $u$ in $H^s_0(\T)$. Moreover for any $\rho\geq 0$ we have for any $u\in H^s_0(\T)$
\begin{equation}\label{ac2}
\|\opbw(a)u\|_{H^{s-m-\rho}_0}\lesssim |a|_{m,s_0-\rho,4}\|u\|_{H^s_0}.
\end{equation}
\end{theorem}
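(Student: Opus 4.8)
\medskip
\noindent\textbf{Proof proposal.}
The plan is to establish \eqref{ac2} for every $\rho\ge 0$ (the case $\rho=0$ being \eqref{ac1}), working with $u$ a trigonometric polynomial in $H^s_0(\T)$; the asserted boundedness and the extension to all of $H^s_0(\T)$ then follow by density. I would argue on the Fourier side, combining the representation \eqref{quantiWeyl} with a Littlewood--Paley decomposition.

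\emph{Reduction to a dyadic block.} Fix a Littlewood--Paley partition $\{\Delta_N\}$ indexed by dyadic integers $N$, so that each $\Delta_N u$ has Fourier support in $\{|k|\sim N\}$ and $\|u\|_{H^\sigma_0}^2\sim\sum_N N^{2\sigma}\|\Delta_N u\|_{L^2}^2$ for all $\sigma\in\R$. The cutoff in \eqref{quantiWeyl} is nonzero only if $|j-k|\le 1.9\,\epsilon\,\langle j+k\rangle$, which, for $\epsilon$ small enough (the regime in which $\opbw$ is defined), forces $\langle j\rangle\sim\langle k\rangle\sim\langle\tfrac{j+k}{2}\rangle$; in particular $\opbw(a)\Delta_N u$ has Fourier support in $\{|j|\sim N\}$, the summands $\opbw(a)\Delta_N u$ overlap boundedly in frequency, and hence
\begin{equation*}
\|\opbw(a)u\|_{H^{s-m-\rho}_0}^2\lesssim\sum_N N^{2(s-m-\rho)}\,\|\opbw(a)\Delta_N u\|_{L^2}^2 .
\end{equation*}
Thus it suffices to prove, for $\widehat{v}$ supported in $\{|k|\sim N\}$, the single--block estimate $\|\opbw(a)v\|_{L^2}\lesssim N^{m+\rho}\,|a|_{m,s_0-\rho,4}\,\|v\|_{L^2}$: inserting it into the last inequality and using $\sum_N N^{2s}\|\Delta_N u\|_{L^2}^2\sim\|u\|_{H^s_0}^2$ yields the theorem.

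\emph{The single--block estimate.} For such $v$ I would pair $\opbw(a)v$ against a test function $w\in L^2$ and organize the double sum by $l:=j-k$; only $|l|\lesssim\epsilon N$ contribute and then $\tfrac{j+k}{2}\sim N$ throughout, so summing first in $k$ gives
\begin{equation*}
|\langle\opbw(a)v,w\rangle|\lesssim\Big(\sum_{|l|\lesssim\epsilon N}G_l\Big)\,\|v\|_{L^2}\|w\|_{L^2},\qquad G_l:=\sup_{|\xi|\sim N}|\widehat{a}(l,\xi)| .
\end{equation*}
The naive pointwise bound $G_l\le\langle l\rangle^{-(s_0-\rho)}\sup_{|\xi|\sim N}\|a(\cdot,\xi)\|_{H^{s_0-\rho}_0}$ is not summable in $l$ when $s_0-\rho\le 1$, so I would split by Cauchy--Schwarz:
\begin{equation*}
\sum_{|l|\lesssim\epsilon N}G_l\le\Big(\sum_{|l|\lesssim\epsilon N}\langle l\rangle^{-2(s_0-\rho)}\Big)^{1/2}\Big(\sum_l\langle l\rangle^{2(s_0-\rho)}G_l^2\Big)^{1/2}.
\end{equation*}
The first factor is $\lesssim N^{\max(0,\,1/2-(s_0-\rho))}\le N^{\rho}$ (here $s_0>1/2$ is used, and, when $s_0-\rho\le\tfrac12$, the truncation $|l|\lesssim\epsilon N$ coming from the cutoff in $\opbw$; for $\rho=0$ it is simply $O(1)$). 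For the second factor I would apply a one--dimensional Sobolev inequality in $\xi$ over the interval $\{|\xi|\sim N\}$ of length $\sim N$, namely $G_l^2\lesssim N^{-1}\int_{|\xi|\sim N}|\widehat{a}(l,\xi)|^2\,d\xi+N\int_{|\xi|\sim N}|\partial_\xi\widehat{a}(l,\xi)|^2\,d\xi$, then sum in $l$; recalling the seminorms \eqref{seminorme} (only the orders $\beta\le 1$ are needed) this gives $\sum_l\langle l\rangle^{2(s_0-\rho)}G_l^2\lesssim N^{2m}\,|a|_{m,s_0-\rho,1}^2$. Altogether $\sum_{|l|\lesssim\epsilon N}G_l\lesssim N^{m+\rho}\,|a|_{m,s_0-\rho,4}$, which is the required single--block bound, and the reduction step above finishes the proof.

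\emph{Main obstacle.} The genuinely delicate point is the estimate of $\sum_l\langle l\rangle^{2(s_0-\rho)}G_l^2$: with only $s_0>1/2$ available one cannot run an absolutely convergent, Schur--type estimate on the Fourier matrix of $\opbw(a)$, and must instead exploit the $\ell^2$ (Hilbert--space) structure of the $x$--Fourier coefficients of the symbol, together with the dyadic localization---the latter being exactly what confines $\xi$ to a bounded interval and makes the Sobolev embedding in $\xi$ available. The Weyl (rather than standard) quantization, with its argument $\tfrac{j+k}{2}$, is harmless here thanks to the comparability $\langle j\rangle\sim\langle k\rangle\sim\langle\tfrac{j+k}{2}\rangle$ on the support of the cutoff; and four $\xi$--derivatives are more than this particular argument consumes, the precise count in \cite{BMM1} reflecting a somewhat more general framework.
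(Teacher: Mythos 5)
Your argument is essentially correct, but note that the paper does not prove Theorem \ref{azione} at all: it is imported verbatim from \cite{BMM1} (Theorem 2.4 there), so what you have written is a self-contained substitute for the cited proof rather than a variant of an argument in this paper. The route in \cite{BMM1} passes through the regularized, spectrally localised symbol $a_\chi$ and an action theorem for spectrally localised symbols with $L^\infty$-based seminorms, the hypothesis $s_0>1/2$ entering through the embedding $H^{s_0}\hookrightarrow L^\infty$; your route instead works directly on the Fourier matrix of $\opbw(a)$, using the dyadic decomposition, a Schur-type pairing with Cauchy--Schwarz in the $x$-frequency $l=j-k$ (this is where $s_0>1/2$ enters, to sum $\langle l\rangle^{-2(s_0-\rho)}$ over $|l|\lesssim \epsilon N$), and a one-dimensional Sobolev inequality in $\xi$ on the block $\{|\xi|\sim N\}$ to exchange $\sup_\xi$ and $\sum_l$ at the cost of one $\xi$-derivative. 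Both proofs are energy/almost-orthogonality arguments of comparable depth, and yours gives the loss-of-$\rho$ estimate \eqref{ac2} and \eqref{ac1} in one stroke, using only $\beta\le 1$ derivatives in $\xi$ (the index $4$ in the seminorm is indeed an artifact of the more general framework of \cite{BMM1}). Two small points you should make explicit: (i) the smallness of $\epsilon$ is genuinely needed, since with the paper's literal convention $\epsilon\in(0,1)$ the cutoff in \eqref{quantiWeyl} need not force $\langle j\rangle\sim\langle k\rangle$ (take $k$ bounded and $j$ large) and the statement would then fail for $s>s_0$; your parenthetical remark is the correct reading of the definition, but it deserves a sentence. (ii) At the borderline $s_0-\rho=1/2$ the first Cauchy--Schwarz factor is $\sqrt{\log N}$ rather than $O(1)$; this is harmless because there $\rho=s_0-1/2>0$ and $\sqrt{\log N}\lesssim_\rho N^\rho$, but the bound $N^{\max(0,1/2-(s_0-\rho))}$ as written skips this case.
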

We now state a result regarding symbolic calculus for the composition of Bony-Weyl paradifferential operators. In the rest of the section, since there is no possibility of confusion, we shall denote the total derivative $\tfrac{d}{dx}$ as $\partial_x$  with the aim of  improving the readability of the formul\ae.   Given two symbols $a$ and $b$ belonging to $\Gamma^m_{s_0+\rho}$ and $\Gamma^{ m'}_{s_0+\rho}$ respectively, we define for $\rho\in(0,3]$
\begin{equation}\label{cancelletto}
a\#_{\rho} b= \begin{cases}
ab \quad \rho\in(0,1]\\
ab+\frac{1}{2\ii}\{a,b\}\quad \rho\in (1,2],\\
ab+\frac{1}{2\ii}\{a,b\}-\frac18\mathfrak{s}(a,b) \quad \rho\in (2,3],\\
\end{cases}
\end{equation}
where we denoted by $\{a,b\}:=\partial_{\xi}a\partial_xb-\partial_xa\partial_{\xi}b$ the Poisson's bracket between symbols and $\mathfrak{s}(a,b):=\partial_{xx}^2a\partial_{\x\x}^2b-2\partial_{x\x}^2a\partial_{x\x}^2b+\partial_{\x\x}^2a\partial_{xx}^2b$. 
\begin{remark}\label{simmetrie}
According to the notation above we have $ab\in\Gamma^{m+m'}_{s_0+\rho}$, $\{a,b\}\in\Gamma^{m+m'-1}_{s_0+\rho-1}$ and $\mathfrak{s}(a,b)\in\Gamma^{m+m'-2}_{s_0+\rho-2}$. Moreover $\{a,b\}=-\{b,a\}$ and $\mathfrak s(a,b)=\mathfrak s(b,a).$
\end{remark}
The following is essentially Theorem 2.5 of \cite{BMM1}, we just need some more precise symbolic calculus since we shall deal with nonlinearities containing three derivatives, while in \cite{BMM1} they have nonlinearities with two derivatives. 
\begin{theorem}\label{compo}
Let $a\in\Gamma^m_{s_0+\rho}$ and $b\in\Gamma^{m'}_{s_0+\rho}$ with $m,m'\in\mathbb{R}$ and $\rho\in(0,3]$. We have $\opbw(a)\circ\opbw(b)=\opbw(a\#_{\rho}b)+R^{-\rho}(a,b)$, where the linear operator $R^{-\rho}$ is defined on $H^s_0(\T)$ with values in $H^{s+\rho-m-m'}$, for any $s\in\R$ and it satisfies 
\begin{equation}\label{resto}
\|R^{-\rho}(a,b)\|_{H^{s-(m+m')+\rho}_0}\lesssim (|a|_{m,s_0+\rho,N}|b|_{m',s_0,N}+|a|_{m,s_0,N}|b|_{m',s_0+\rho,N})\|u\|_{H^s_0},
\end{equation}
where $N\geq 8$.\end{theorem}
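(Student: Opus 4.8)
The plan is to follow the proof of Theorem 2.5 of \cite{BMM1} essentially verbatim; the only new point is that we carry the symbolic expansion one order further, so as to produce the second--order term $-\tfrac18\mathfrak s(a,b)$ which appears in \eqref{cancelletto} for $\rho\in(2,3]$.

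First I would reduce the composition to an explicit symbol computation on the Fourier side. By \eqref{quantiWeyl}, for every symbol $c$ one has $\opbw(c)=\opw(\widetilde c)$, where $\widetilde c$ is the symbol whose Fourier coefficient at $(n,\xi)$ is $\chi_\epsilon(|n|/\langle 2\xi\rangle)\,\widehat c(n,\xi)$; this $\widetilde c$ is spectrally localised. Since $\widetilde a,\widetilde b$ are spectrally localised, $\opw(\widetilde a)\circ\opw(\widetilde b)$ is again a Weyl operator $\opw(c)$, whose symbol is, at output/input frequencies $j,l$ and with $\eta:=\tfrac{j+l}2$,
\[
\widehat c(j-l,\eta)=\frac1{2\pi}\sum_{k\in\Z^*}\chi_\epsilon\Big(\tfrac{|j-k|}{\langle j+k\rangle}\Big)\chi_\epsilon\Big(\tfrac{|k-l|}{\langle k+l\rangle}\Big)\,\widehat a\Big(j-k,\eta+\tfrac{k-l}2\Big)\,\widehat b\Big(k-l,\eta-\tfrac{j-k}2\Big);
\]
on the torus this is a finite sum, thanks to the spectral localisation, so no asymptotic--expansion issue arises.

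Next I would Taylor expand in $\xi$. Setting $n_1:=j-k$, $n_2:=k-l$ (so that $n_1+n_2=j-l$), I expand $\widehat a(n_1,\cdot)$ in powers of $n_2/2$ and $\widehat b(n_2,\cdot)$ in powers of $-n_1/2$ about $\eta$, retaining all terms of total $\xi$--differentiation order $\le 2$. Using the identity $n\,\widehat c(n,\xi)=-\ii\,\widehat{\partial_x c}(n,\xi)$ to turn the surviving powers of $n_1,n_2$ into $x$--derivatives, the retained part reconstructs in physical space to the spectrally localised symbol $ab+\tfrac1{2\ii}\{a,b\}-\tfrac18\mathfrak s(a,b)=a\#_{\rho}b$ when $\rho\in(2,3]$ (for smaller $\rho$ one keeps fewer terms and recovers the first two lines of \eqref{cancelletto}, as in \cite{BMM1}). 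Two errors remain, both absorbed into $R^{-\rho}(a,b)$: the Taylor remainder, and the discrepancy between the product of the two cutoffs above and the single cutoff $\chi_\epsilon(|n_1+n_2|/\langle j+l\rangle)$ built into $\opbw(a\#_{\rho}b)$; the latter is supported where $|n_1|$ or $|n_2|$ is comparable with $\langle\eta\rangle$, hence it defines a smoothing operator and is treated as in \cite{BMM1}.

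Finally I would estimate $R^{-\rho}$. Reconstructing the remaining powers of $n_1,n_2$ as $x$--derivatives, the Taylor remainder becomes a superposition (over $t\in[0,1]$ and over the interaction frequencies) built from $\partial_\xi^{p}\partial_x^{q}a$ and $\partial_\xi^{q}\partial_x^{p}b$ with $p+q\ge 3$, pieces that by Remark \ref{simmetrie} behave as symbols in $\Gamma^{m+m'-(p+q)}$; whenever their Sobolev regularity is $\ge s_0$ one applies Theorem \ref{azione} to them directly, and, together with the treatment of the cutoff discrepancy, this already yields \eqref{resto} when $\rho=3$. I expect the genuine obstacle to be the case $\rho\in(2,3)$ non-integer: the integer--order Taylor expansion produces the symbols $\partial_\xi^3 a\cdot\partial_x^3 b$ and $\partial_x^3 a\cdot\partial_\xi^3 b$, whose Sobolev regularity is only $s_0+\rho-3<s_0$ (possibly even $<1/2$), so that Theorem \ref{azione} does not apply to them as they stand. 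The remedy, exactly as in \cite{BMM1}, is to spend precisely $\rho$ orders rather than $3$ — splitting the factors $n_i$ partly into $x$--derivatives and partly into factors $\langle\eta\rangle$ (the latter absorbed by the spectral localisation) and interpolating — so that the quantities actually quantised stay in $\Gamma^{\cdot}_{s_0}$, after which Theorem \ref{azione} and summation over the frequency shells deliver \eqref{resto} for a suitable $N\ge 8$.
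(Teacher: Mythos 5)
Your proposal follows essentially the same route as the paper's proof: both pass to the Weyl quantization of the regularized symbols, Taylor expand on the Fourier side about $\tfrac{j+\ell}{2}$ to reconstruct $a\#_{\rho}b$, and then split the error into the cutoff discrepancy (the paper's $R_1$, treated via Theorem A.7 of \cite{BMM1}), the integral Taylor remainders (the paper's $R_2,R_3$, where, exactly as you observe, one must gain only $\rho$ rather than $3$ orders so as to stay at regularity $s_0$, following Theorem A.5 of \cite{BMM1}), and the explicit cross terms of total order $\geq 3$ (the paper's $R_4$), which are genuine symbols of class $\Gamma^{m+m'-3}_{s_0}$ and $\Gamma^{m+m'-4}_{s_0}$ and are estimated directly with Theorem \ref{azione}. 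No substantive difference from the paper's argument, and no gap.
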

\begin{proof}
We prove the statement for $\rho\in (2,3]$, for smaller $\rho$ the reasoning is similar.
Recalling formul\ae \eqref{quantiWeyl} and \eqref{quantiWeylcl} we have
\begin{equation*}\begin{aligned}
\opbw({a})\opbw({b})u&=\opw(a_{\chi})\opw(b_{\chi})u\\
&=\sum_{j,k,\ell}\hat{a}_{\chi}(j-k,\frac{j+k}{2})\hat{b}_{\chi}(k-\ell,\frac{k+\ell}{2})u_{\ell}e^{\ii jx}.
\end{aligned}\end{equation*}
We Taylor expand $\hat{a}_{\chi}(j-k,\frac{j+k}{2})$ with respect to the second variable in the point $\tfrac{j+\ell}{2}$, we have
\begin{equation*}\begin{aligned}
\hat{a}_{\chi}&(j-k,\tfrac{j+k}{2})=\\
&\hat{a}_{\chi}(j-k,\tfrac{j+\ell}{2})+\tfrac{k-\ell}{2}\partial_{\xi}\hat{a}_{\chi}(j-k,\tfrac{j+\ell}{2})
+\tfrac{(k-\ell)^2}{8}\hat{a}_{\chi}(j-k,\tfrac{j+\ell}{2})\\
&+\tfrac{(k-\ell)^3}{8}\int_0^1(1-t)^2\partial_{\xi}^3\hat{a}_{\chi}(j-k,\tfrac{j+\ell+t(k-\ell)}{2})dt.
\end{aligned}\end{equation*}
Analogously we obtain
\begin{equation*}
\begin{aligned}
\hat{b}_{\chi}&(k-\ell,\tfrac{k+\ell}{2})=\\
&+\tfrac{k-j}{2}\partial_{\xi}\hat{b}_{\chi}(k-\ell,\tfrac{j+\ell}{2})+\tfrac{(k-j)^2}{8}\partial_{\xi}^2\hat{b}_{\chi}(k-\ell,\tfrac{j+\ell}{2})\\
&+\tfrac{(k-j)^3}{8}\int_0^1(1-t)^2\partial_{\xi}^3\hat{b}_{\chi}(k-\ell,\tfrac{j+\ell+t(k-j)}{2})dt.
\end{aligned}
\end{equation*}
An explicit computation proves that 
\begin{equation*}
\begin{aligned}
\opbw(a)\opbw(b)-\opbw(ab+\tfrac{1}{2\ii}-\tfrac18\mathfrak{s}(a,b))u=\sum_{j=1}^{4}R_i(a,b)u,
\end{aligned}
\end{equation*}
where 
\begin{align*}
R_1(a,b)&:=\opw\big(a_{\chi}b_{\chi}-(ab)_{\chi}+\tfrac{1}{2\ii}(\{a_{\chi},b_{\chi}\}-\{a,b\}_{\chi})-\tfrac18(\mathfrak{s}(a_{\chi},b_{\chi})-\mathfrak{s}(a,b)_{\chi})\big)u,\\
R_2(a,b)&:=\sum Q_3^b\big(\hat{a}_{\chi}(j-k,\tfrac{j+\ell}{2})+\tfrac{k-\ell}{2}\partial_{\xi}\hat{a}_{\chi}(j-k,\tfrac{j+\ell}{2})
+\tfrac{(k-\ell)^2}{8}\hat{a}_{\chi}(j-k,\tfrac{j+\ell}{2})\big)u_{\ell}e^{\ii j x},\\
R_3(a,b)&:=\sum Q_3^a \hat{b}_{\chi}(k-\ell,\tfrac{k+\ell}{2})u_{\ell}e^{\ii j x},\\
R_4(a,b)&:=-\tfrac{1}{16\ii}\opw(\partial_x^2\partial_{\xi}a\partial_x\partial^2_{\xi}b+\partial_x^2\partial_{\xi}b\partial_x\partial_{\xi}^2a)u+\tfrac{1}{64}\opw(\partial_x^2\partial_{\xi}^2a\partial_x^2\partial_{\xi}^2b)u,
\end{align*}
where we have defined $Q_3^a:=\tfrac{(k-\ell)^3}{8}\int_0^1(1-t)^2\partial_{\xi}^3\hat{a}_{\chi}(j-k,\tfrac{j+\ell+t(k-\ell)}{2})dt$ and analogously $Q_3^b$. We prove that each $R_i$ fulfils the estimate \eqref{resto}. The remainders $R_1$, $R_2$ and $R_3$ have to be treated as done in the proof of Theorem 2.5 in \cite{BMM1}, we just underline the differences. Concerning $R_1$ it is enough to prove that for any $\alpha\leq 2$ the symbol $\partial_\xi^{\alpha}a_{\chi}\partial_x^{\alpha}b_{\chi}-\partial_\xi^{\alpha}b_{\chi}\partial_x^{\alpha}a_{\chi}$ is a spectrally localised symbol belonging  to $\Gamma^{m+m'-\rho}_{L^{\infty}}$. Following word by word the proof in \cite{BMM1}, with $d=1$ and $\alpha=2$ (instead of $\alpha=1$ therein) one may bound $|\partial_\xi^{\alpha}a_{\chi}\partial_x^{\alpha}b_{\chi}-\partial_\xi^{\alpha}b_{\chi}\partial_x^{\alpha}a_{\chi}|_{m,W^{1,\infty},n}\lesssim |a|_{m,W^{1,\infty},n+2}|b|_{m',L^{\infty},n+2}+|a|_{m,L^{\infty},n+2}|b|_{m',W^{1,\infty},n+2}$. The estimate \eqref{resto} on the remainder $R_1$ follows from Theorem A.7 in \cite{BMM1}. In order to prove that $R_3$ and $R_2$ satisfy \eqref{resto}, one has to follow the proof of Theorem A.5 in \cite{BMM1} with $d=1$, $\alpha=3$ and $\beta\leq 2$ corresponding to the remainder $R_2(a,b)$ therein. Concerning the remainder $R_4$ we have the following: the symbol of the first summand is in the class $\Gamma^{m+m'-3}_{s_0}$ and the second in $\Gamma^{m+m'-4}_{s_0}$, the estimates follow then by Theorem \ref{azione}.
\end{proof}

\begin{lemma}{\bf (Paraproduct).}\label{Paraproduct}
Fix $s_0>1/2$ and 
let $f,g\in H^{s}_0(\mathbb{T};\mathbb{C})$ for $s\geq s_0$. Then
\begin{equation}\label{eq:paraproduct}
fg=\opbw(f)g+\opbw({g})f+\mathcal{R}(f,g)\,,
\end{equation}
where
\begin{equation}\label{eq:paraproduct2}
\begin{aligned}
\widehat{\mathcal{R}(f,g)}(\x)&=\frac{1}{(2\pi)}
\sum_{\eta\in \mathbb{Z}^*}
a(\x-\eta,\xi)\hat{f}(\x-\eta)\hat{g}(\eta)\,,\\
 |a(v,w)|&\lesssim\frac{(1+\min(|v|,|w|))^{\rho}}{(1+\max(|v|,|w|))^{\rho}}\,,
\end{aligned}
\end{equation}
for any $\rho\geq0$.
For $0\leq \rho\leq s-s_0$ one has
\begin{equation}\label{eq:paraproduct22}
\|\mathcal{R}(f,g)\|_{H^{s+\rho}_0}\lesssim\|f\|_{H^{s}_0}\|g\|_{H^{s}_0}\,.
\end{equation}
\end{lemma}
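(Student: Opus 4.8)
The plan is to compute the three terms appearing in \eqref{eq:paraproduct} on the Fourier side and simply read off $\mathcal{R}(f,g)$. Since $f$ and $g$, regarded as symbols, are of order $0$ and independent of $\x$, the quantization formula \eqref{quantiWeyl} collapses to a multiplication rule: with all summation indices running over $\Z^*$ (which is harmless because $\widehat f(0)=\widehat g(0)=0$), one has for $\x\in\Z^*$
\begin{equation*}
\widehat{\opbw(f)g}(\x)=\frac{1}{2\pi}\sum_{\eta}\chi_\epsilon\big(\tfrac{|\x-\eta|}{\la\x+\eta\ra}\big)\widehat f(\x-\eta)\widehat g(\eta),
\end{equation*}
\begin{equation*}
\widehat{\opbw(g)f}(\x)=\frac{1}{2\pi}\sum_{\eta}\chi_\epsilon\big(\tfrac{|\eta|}{\la 2\x-\eta\ra}\big)\widehat f(\x-\eta)\widehat g(\eta),
\end{equation*}
where in the second identity I have relabelled the summation index so that $\eta$ is the frequency carried by $g$. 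Subtracting both from $\widehat{fg}(\x)=\frac{1}{2\pi}\sum_{\eta}\widehat f(\x-\eta)\widehat g(\eta)$ immediately gives \eqref{eq:paraproduct} and \eqref{eq:paraproduct2} with the explicit symbol
\begin{equation*}
a(\x-\eta,\x)=1-\chi_\epsilon\big(\tfrac{|\x-\eta|}{\la\x+\eta\ra}\big)-\chi_\epsilon\big(\tfrac{|\eta|}{\la 2\x-\eta\ra}\big).
\end{equation*}

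Next I would prove the pointwise bound on $a$. Since $0\le\chi_\epsilon\le1$ one trivially has $|a|\le3$, so the real content is to localise the support of $a$. This I would do by a short case distinction on which cutoff is active and whether it lies in its transition region. For instance, if the first cutoff differs from $1$ then $|\x-\eta|>1.1\,\epsilon\la\x+\eta\ra$, and writing $\eta=\tfrac12\big((\x+\eta)-(\x-\eta)\big)$ and using the triangle inequality forces $|\eta|\le C_\epsilon|\x-\eta|$; if it is moreover nonzero, so that its argument also lies below $1.9\,\epsilon$, one gets conversely $|\x-\eta|\le C_\epsilon(1+|\eta|)$; the second cutoff is symmetric, and if both cutoffs equal $1$ then both of their arguments are $\le1.1\,\epsilon$, which (using that $\epsilon$ is small) pins $|\x-\eta|$, $|\eta|$ and $|\x|$ into a bounded set. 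Collecting these, on $\mathrm{supp}\,a$ either all three frequencies are $O_\epsilon(1)$, or $\la\x-\eta\ra\sim_\epsilon\la\eta\ra$, in which case moreover $\la\x\ra\le\la\x-\eta\ra+\la\eta\ra\lesssim_\epsilon\la\x-\eta\ra$. On this set the quotient $(1+\min(|\x-\eta|,|\eta|))^{\rho}/(1+\max(|\x-\eta|,|\eta|))^{\rho}$ is bounded below by a positive constant depending only on $\epsilon,\rho$, so $|a|\le3$ yields \eqref{eq:paraproduct2}.

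Finally, for the smoothing estimate \eqref{eq:paraproduct22} I would feed in the support information just obtained: on $\mathrm{supp}\,a$ we have $\la\x\ra\lesssim\la\x-\eta\ra$ and $\la\x\ra\lesssim\la\eta\ra$, whence $\la\x\ra^{s+\rho}\lesssim\la\x-\eta\ra^{s}\la\eta\ra^{\rho}$, and together with $|a|\lesssim1$ this gives
\begin{equation*}
\la\x\ra^{s+\rho}\big|\widehat{\mathcal{R}(f,g)}(\x)\big|\lesssim\sum_{\eta}\big(\la\x-\eta\ra^{s}|\widehat f(\x-\eta)|\big)\big(\la\eta\ra^{\rho}|\widehat g(\eta)|\big)=(F\ast G)(\x),
\end{equation*}
with $F(\zeta):=\la\zeta\ra^{s}|\widehat f(\zeta)|$ and $G(\zeta):=\la\zeta\ra^{\rho}|\widehat g(\zeta)|$. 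By Young's inequality $\ell^{2}\ast\ell^{1}\hookrightarrow\ell^{2}$ and then Cauchy--Schwarz,
\begin{equation*}
\|\mathcal{R}(f,g)\|_{H^{s+\rho}_0}\lesssim\|F\|_{\ell^2}\|G\|_{\ell^1}\le\|f\|_{H^s_0}\Big(\sum_{\zeta}\la\zeta\ra^{2(\rho-s)}\Big)^{1/2}\|g\|_{H^s_0}\lesssim\|f\|_{H^s_0}\|g\|_{H^s_0},
\end{equation*}
the last series converging precisely because $\rho\le s-s_0<s-\tfrac12$. The only step that requires genuine care is the support analysis of $a$: one must track the $\epsilon$-dependent constants through the transition regions of $\chi_\epsilon$ and separately handle the low-frequency exceptional set. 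Everything else is routine Fourier bookkeeping together with Young's and Cauchy--Schwarz inequalities.
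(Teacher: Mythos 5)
Your proof is correct and follows essentially the same route as the paper: you identify the remainder on the Fourier side as the multiplier $1-\chi_\epsilon\big(\tfrac{|\xi-\eta|}{\langle\xi+\eta\rangle}\big)-\chi_\epsilon\big(\tfrac{|\eta|}{\langle 2\xi-\eta\rangle}\big)$ (the paper's $\Theta$), show its support forces $\langle\xi-\eta\rangle\sim\langle\eta\rangle\gtrsim\langle\xi\rangle$ up to an $\epsilon$-bounded exceptional set, and conclude with a weighted convolution estimate. The only (harmless) deviation is the last step, where you use $\langle\xi\rangle^{s+\rho}\lesssim\langle\xi-\eta\rangle^{s}\langle\eta\rangle^{\rho}$ and Young's inequality $\ell^2\ast\ell^1\hookrightarrow\ell^2$ in place of the paper's splitting into $|\xi-\eta|\gtrless|\eta|$ with Cauchy--Schwarz, both arguments requiring exactly the summability condition $s-\rho\geq s_0>\tfrac12$.
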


\begin{proof}
Notice that
\begin{equation}\label{prodFG}
\widehat{(fg)}(\x)=\sum_{\eta\in\mathbb{Z}^*}\hat{f}(\x-\eta)\hat{g}(\eta)\,.
\end{equation}
Consider the cut-off function $\chi_{\epsilon}$ 
and define a new cut-off function $\Theta : \mathbb{R} \to [0,1]$ as
\begin{equation}\label{cutoffTHETA}
1=\chi_{\epsilon}\left(\frac{| \x-\eta|}{\langle\x+\eta\rangle}\right)+
\chi_{\epsilon}\left(\frac{|\eta|}{\langle2\x-\eta\rangle}\right)+\Theta(\x,\eta)\,.
\end{equation}
Recalling \eqref{prodFG} and \eqref{quantiWeyl} we note that 
\begin{equation}\label{paraprodFG}
\begin{aligned}
\widehat{(T_fg)}(\x)&=\sum_{\eta\in\mathbb{Z}^{*}}
\chi_{\epsilon}\left(\frac{|\x-\eta|}{\langle\x+\eta\rangle}\right)
\hat{f}(\x-\eta)\hat{g}(\eta)\,, \\
\widehat{(T_g f)}(\x)&=\sum_{\eta\in\mathbb{Z}^{*}}
\chi_{\epsilon}\left(\frac{|\eta|}{\langle2\x-\eta\rangle}\right)
\hat{f}(\x-\eta)\hat{g}(\eta)\,, 
\end{aligned}
\end{equation}
and 
\begin{equation}\label{pararestoFG}
\mathcal{R}:=\mathcal{R}(f,g)\,,\qquad
\widehat{\mathcal{R}}(\x):=
\sum_{\eta\in\mathbb{Z}^{*}}
\Theta(\x,\eta)
\hat{f}(\x-\eta)\hat{g}(\eta)\,.
\end{equation}
To obtain the second in \eqref{paraprodFG} one has to use the \eqref{quantiWeyl}
and perform the change of variable $\x-\eta\rightsquigarrow \eta$.
By the definition of the cut-off function $\Theta(\x,\eta)$ we deduce
that, if $\Theta(\x,\eta)\neq0$ we must have
\begin{equation}\label{condTheta}
|\x-\eta|\geq \frac{5\epsilon}{4}\langle\x+\eta\rangle\quad {\rm and}
\quad 
|\eta|\geq \frac{5\epsilon}{4}\langle2\x-\eta\rangle
\qquad \Rightarrow\quad
\langle \eta\rangle \sim \langle \x-\eta\rangle\,.
\end{equation}
This implies that, setting 
$a(\x-\eta,\eta):=\Theta(\x,\eta)$, we get the \eqref{eq:paraproduct2}.
The \eqref{condTheta} also implies that
$\langle\x\rangle\lesssim
\max\{
\langle \x-\eta\rangle,\langle\eta\rangle\}$.
Then we have
\[
\begin{aligned}
\|\mathcal{R}h\|_{H^{s+\rho}_0}^{2}&\lesssim\sum_{\x\in\mathbb{Z}^{*}}\Big(
\sum_{\eta\in\mathbb{Z}^{*}}
|a(\x-\eta,\eta)||\hat{f}(\x-\eta)||\hat{g}(\eta)|
|\x|^{s+\rho}
\Big)^{2}\\
&\stackrel{\mathclap{\eqref{eq:paraproduct2}}}{\lesssim}
\sum_{\x\in\mathbb{Z}^{*}}\Big(
\sum_{|\x-\eta|\geq |\eta|}
|\x-\eta|^{s}|\hat{f}(\x-\eta)| | \eta|^{\rho}|\hat{g}(\eta)|
\Big)^{2}
\\&+
\sum_{\x\in\mathbb{Z}^{*}}\Big(
\sum_{|\x-\eta|\leq |\eta|}
| \x-\eta|^{\rho}|\hat{f}(\x-\eta)||\hat{g}(\eta)|
|\eta|^{s}
\Big)^{2}\\
&\lesssim
\sum_{\x,\eta\in\mathbb{Z}^{*}}
| \eta|^{2(s_0+\rho)}|\hat{g}(\eta)|^{2}
|\x-\eta|^{2s}|\hat{f}(\x-\eta)|^{2}\\
&+\sum_{\x,\eta\in\mathbb{Z}^{*}}
| \eta|^{2s}|\hat{g}(\eta)|^{2}
|\x-\eta|^{2(s_0+\rho)}|\hat{f}(\x-\eta)|^{2}\\
&\lesssim\|f\|_{H^{s}_0}^{2}\|g\|_{H^{s_0+\rho}_0}^{2}
+\|f\|_{H^{s_0+\rho}_0}^{2}\|g\|_{H^{s}_0}^{2}\,,
\end{aligned}
\]
which implies the \eqref{eq:paraproduct22} for  $s_0+\rho\leq s$.
\end{proof}

\section{Paralinearization}
The equation \eqref{KdV-astratta} is equivalent to 
\begin{equation}\label{KdV-concreta}
\begin{aligned}
u_t&+u_{xxx}\partial_{z_1z_1}^2 F+2u_{xx}\partial^3_{z_1xz_1}F+u_{xx}^2\partial_{z_1z_1z_1}^3F
     +2u_xu_{xx}\partial_{z_1z_1z_0}^3F\\
     &+u_x^2\partial_{z_1z_0z_0}^3F+u_x(-\partial_{z_0z_0}^2F+2\partial^3_{z_1xz_0}F) -\partial^2_{z_0x}F+\partial^3_{z_1xx}F=0.
     \end{aligned}
\end{equation}
We have the following.
\begin{theorem}
The equation \eqref{KdV-concreta} is equivalent to
\begin{equation}\label{KdVpara}
\begin{aligned}
u_t+\opbw(A(u))u+R_0=0,
\end{aligned}
\end{equation}
where 
\begin{equation*}
A(u):=\partial_{z_1z_1}^2F(\ii\xi)^3+\tfrac12\tfrac{d}{dx}\Big(\partial_{z_1z_1}^2F\Big)(\ii\xi)^2+
a_1(u,u_x,u_{xx},u_{xxx})(\ii\xi),
\end{equation*}
with $a_1$ real function and $R_0$  semi-linear remainder. Moreover we have the following estimates. Let $\sigma\geq s_0>1+1/2$ and consider $U,V\in H^{\sigma+3}_0$
\begin{align}
\|R_0(U)\|_{H^{\sigma}_0}&\leq C(\|U\|_{H^{s_0+3}_0})\|U\|_{H^{\sigma}_0},\quad \|R_0(U)\|_{H^{\sigma}_0}\leq C(\|U\|_{H^{s_0}_0})\|U\|_{H^{\sigma+3}_0}, \label{R01}\\
\|R_0(U)-R_0(V)\|_{H^{\sigma}_0}&\leq C(\|U\|_{H^{s_0+3}_0}+\|V\|_{H^{s_0+3}_0})\|U-V\|_{H^{\sigma}_0}\nonumber\\
&\quad+C(\|U\|_{H^{\sigma}_0}+\|V\|_{H^{\sigma}_0})\|U-V\|_{H^{s_0+3}_0},\label{R02}\\
\|R_0(U)-R_0(V)\|_{H^{s_0}_0}&\leq C(\|U\|_{H^{s_0+3}_0}+\|V\|_{H^{s_0+3}_0})\|U-V\|_{H^{s_0}_0},\label{R03}
\end{align}
where $C$ is a non decreasing and positive function. Concerning the para-differential operator we have for any $\s\geq 0$
\begin{equation}\label{paralip}
\|\opbw(A(u)-A(w))v\|_{H^{\s}_0}\leq C(\|u\|_{H^{s_0}_0}+ \|w\|_{H^{s_0}_0})\|u-w\|_{H^{s_0}_0}\|v\|_{H^{s_0+3}_0}.
\end{equation}
\end{theorem}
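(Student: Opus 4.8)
The plan is to paralinearize the left-hand side of \eqref{KdV-concreta} term by term, using the Bony paraproduct formula (Lemma \ref{Paraproduct}) to split each nonlinear product into its paradifferential part plus a smoothing remainder, and then to keep track of which pieces land in the principal symbol $A(u)$ and which can be absorbed into the semilinear remainder $R_0$. First I would treat the leading term $u_{xxx}\partial_{z_1z_1}^2F(x,u,u_x)$: write $u_{xxx}=\opbw((\ii\xi)^3)u$ up to a smoothing term, and apply the paraproduct decomposition to the product of $\opbw((\ii\xi)^3)u$ with the function $\partial^2_{z_1z_1}F(x,u,u_x)$. The paradifferentiation of the coefficient acting on $u$ gives $\opbw(\partial^2_{z_1z_1}F\,(\ii\xi)^3)u$; the symmetrization/Weyl normalization of $\opbw((\ii\xi)^3)$ composed with multiplication produces the subprincipal correction $\tfrac12\tfrac{d}{dx}(\partial^2_{z_1z_1}F)(\ii\xi)^2$ — this is exactly the Weyl-quantization bookkeeping, and the reality of $\partial^2_{z_1z_1}F$ is what makes this subprincipal symbol real. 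The opposite paraproduct term $\opbw(\opbw((\ii\xi)^3)u)\,\partial^2_{z_1z_1}F$ and the paraproduct remainder $\mathcal R$ are three-derivative-gaining-on-one-factor objects, hence semilinear: they go into $R_0$. The terms with $u_{xx}$ (order two) contribute to $a_1(\ii\xi)$ after paralinearization (again the coefficient paradifferentiated onto $u$, the rest into $R_0$), and the terms involving only $u$, $u_x$ and the explicit $x$-dependence are of order $\le 1$ and either feed $a_1$ or are placed directly in $R_0$. One must check that $a_1$ is real: every coefficient $\partial^k F$ is real-valued, and the worst term $2u_{xx}\partial^3_{z_1xz_1}F$ paralinearizes, after one integration-by-parts-type Weyl reordering, into a first-order symbol with real leading part, the imaginary Weyl corrections being of order $\le 0$ and hence reabsorbed; I would phrase this as: $a_1$ collects the first-order contributions and, after Weyl symmetrization, is real modulo $\Gamma^0$, the $\Gamma^0$ part entering $\opbw$ being bounded on every $H^s_0$ by Theorem \ref{azione} and thus harmless for $R_0$.

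For the quantitative estimates, the strategy is entirely \emph{mechanical} once the structure is fixed. The coefficients of $A(u)$ are of the form $\partial^k_z F(x,u,u_x)$ — compositions of a polynomial $F$ with $u$ and $u_x$ — so by the standard composition/tame estimates in Sobolev spaces (which hold for $s_0>1/2$, and here $s_0>3/2$ is more than enough) one gets $|A(u)|_{3,s_0,n}\le C(\|u\|_{H^{s_0+1}_0})$ and, more to the point, the Lipschitz bound: $\|\partial^k_z F(\cdot,u,u_x)-\partial^k_z F(\cdot,w,w_x)\|_{H^{s_0}_0}\lesssim C(\|u\|_{H^{s_0+1}_0}+\|w\|_{H^{s_0+1}_0})\|u-w\|_{H^{s_0+1}_0}$, because differences of composed functions are controlled by differences of the inner arguments via the mean value theorem and the Moser-type product estimate. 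Feeding this into \eqref{ac1} of Theorem \ref{azione} (with $m=3$, the three derivatives of $v$) gives \eqref{paralip}: $\|\opbw(A(u)-A(w))v\|_{H^\sigma_0}\lesssim|A(u)-A(w)|_{3,s_0,4}\|v\|_{H^{\sigma+3}_0}$, and since $\sigma\ge0$ is arbitrary while the seminorm on the left only needs the $H^{s_0}_0$-norm of the symbol's coefficients, the stated form with $\|v\|_{H^{s_0+3}_0}$ on the right follows by monotonicity; note $s_0+3>4+1/2$ matches the regularity threshold in Theorem \ref{totale}. For \eqref{R01}--\eqref{R03} one writes $R_0$ as a finite sum of the discarded pieces: (i) smoothing remainders from replacing $u_x,u_{xx},u_{xxx}$ by their paradifferential counterparts, which gain three derivatives by construction; (ii) paraproduct remainders $\mathcal R(\,\cdot\,,\,\cdot\,)$ estimated by \eqref{eq:paraproduct22}; (iii) the "wrong-side" paraproduct terms $\opbw(\opbw(\ii\xi)^k u)\,(\partial^j_z F)$, where the symbol depends on $u$ through three derivatives but is then a \emph{bounded} multiplier (order $0$ after pairing), so Theorem \ref{azione} applied with $m=0$ gives the $H^{s_0+3}_0$-on-the-argument bound in the second inequality of \eqref{R01}; (iv) the genuinely lower-order terms $-\partial^2_{z_0x}F+\partial^3_{z_1xx}F$ etc., pure compositions, handled by Moser estimates. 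The two forms in \eqref{R01} correspond to putting the top three derivatives either on the "coefficient-like" factor (controlled in low norm) or on $u$ itself; the tame structure of all the estimates involved delivers both. The Lipschitz statements \eqref{R02}--\eqref{R03} are obtained by the same decomposition applied to differences, using the telescoping identity $P(U)-P(V)=\int_0^1 dP(V+t(U-V))[U-V]\,dt$ for each polynomial-type nonlinearity together with the product rule for Lipschitz dependence, which automatically produces the "high$\times$low-difference $+$ low$\times$high-difference" shape of \eqref{R02}, and collapses to the single term \eqref{R03} when all norms are taken at the low index $s_0$.

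The main obstacle I anticipate is \emph{not} the estimates — those are routine tame/Moser bookkeeping once the paraproduct and composition theorems (Lemmas \ref{Paraproduct}, Theorems \ref{azione}, \ref{compo}) are invoked — but rather the \emph{precise identification of the subprincipal symbol}, i.e. showing that exactly the term $\tfrac12\tfrac{d}{dx}(\partial^2_{z_1z_1}F)(\ii\xi)^2$ survives at order two and that everything of order one collects into a \emph{real} symbol $a_1(\ii\xi)$ with no stray imaginary order-$1$ piece. Concretely, one must carefully commute the multiplication operators past $\opbw((\ii\xi)^3)$ using the symbolic calculus \eqref{cancelletto} up to $\rho=3$: the Poisson-bracket term $\tfrac1{2\ii}\{(\ii\xi)^3,c(x)\}=\tfrac1{2\ii}\cdot 3(\ii\xi)^2(\ii)c'(x)$ produces, after the standard left-quantization-to-Weyl conversion, precisely the real factor $\tfrac12 c'(x)(\ii\xi)^2$ (the two $\ii$'s cancelling), while the $\mathfrak s$-term and the lower brackets contribute only at order $\le1$. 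One also has to verify that the contributions of $2u_{xx}\partial^3_{z_1xz_1}F$, which naively look order-$2$, actually become order-$1$ after pairing $u_{xx}=\opbw((\ii\xi)^2)u$ with the coefficient and extracting its paradifferential action (the coefficient-paradifferentiated-onto-$u$ term being order $2$ but with symbol $\partial^3_{z_1xz_1}F(\ii\xi)^2$ — wait, that is order $2$, so it must be checked that the \emph{real} order-$2$ coefficient is $\tfrac12\tfrac{d}{dx}\partial^2_{z_1z_1}F = \partial^3_{xz_1z_1}F+\partial^3_{z_0z_1z_1}F\,u_x+\partial^3_{z_1z_1z_1}F\,u_{xx}$, which indeed recombines $2u_{xx}\partial^3_{z_1xz_1}F$ and the $u_{xx}^2,u_xu_{xx}$-terms precisely into this total derivative). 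That recombination — matching the explicit chain-rule expansion of $\tfrac{d}{dx}(\partial^2_{z_1z_1}F(x,u,u_x))$ against the order-$2$ paradifferential output — is the delicate algebraic heart of the proof; the rest is bounding remainders.
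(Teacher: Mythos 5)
Your overall strategy (Bony paraproduct on each term of \eqref{KdV-concreta}, symbolic calculus from Theorem \ref{compo} to produce the order-two correction, recombination of the order-two coefficients via the chain rule) is the same as the paper's, but there is one genuine gap: you throw the ``wrong-side'' paraproduct terms, e.g. $\opbw(u_{xxx})\,\partial^2_{z_1z_1}F(x,u,u_x)$, directly into $R_0$, calling them semilinear. They are not remainders in the sense of \eqref{R01}: since $\partial^2_{z_1z_1}F(x,u,u_x)$ genuinely contains $u_x$ (take $F=z_1^4$, so $\partial^2_{z_1z_1}F=12u_x^2$), its $H^\sigma_0$ norm is only controlled by $C(\|u\|_{H^{s_0}_0})\|u\|_{H^{\sigma+1}_0}$, and the best bound Theorem \ref{azione} gives for this term is $C(\|u\|_{H^{s_0+3}_0})\|u\|_{H^{\sigma+1}_0}$, which misses the first (tame) inequality of \eqref{R01} by one derivative; for $\sigma\geq s_0+2$ no interpolation between $\|u\|_{H^{s_0+3}_0}$ and $\|u\|_{H^{\sigma}_0}$ can recover $\|u\|_{H^{\sigma+1}_0}$, and that tame inequality is exactly what the iteration uses at the level $\sigma=s$ (estimate \eqref{caz3}); it is also what makes the regularity threshold of Theorem \ref{totale} come out right. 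The same loss occurs for every wrong-side paraproduct whose coefficient depends on $u_x$ (the order-two terms as well). The paper's proof handles this by paralinearizing the coefficient itself, $\partial^2_{z_1z_1}F(x,u,u_x)=\opbw(\partial^3_{z_1z_1z_1}F)\opbw(\ii\xi)u+(\text{order }0\text{ and smoothing})$, and then composing with $\opbw(u_{xxx})$ via Theorem \ref{compo}: these contributions become genuine first-order paradifferential operators acting on $u$ and are absorbed into $a_1$ — which is precisely why the statement's $a_1$ depends on $u_{xxx}$, whereas your decomposition would produce an $a_1$ depending only on $(u,u_x,u_{xx})$ and an $R_0$ violating \eqref{R01}.

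A secondary, repairable issue is your bookkeeping of the subprincipal symbol: the Poisson-bracket term from $\opbw(\partial^2_{z_1z_1}F)\circ\opbw((\ii\xi)^3)$ is $\tfrac32\tfrac{d}{dx}(\partial^2_{z_1z_1}F)\,\xi^2$, not the $\tfrac12\tfrac{d}{dx}(\partial^2_{z_1z_1}F)(\ii\xi)^2$ you first assert; the coefficient $\tfrac12$ appears only after adding the order-two paradifferential outputs $2\tfrac{d}{dx}(\partial^2_{z_1z_1}F)(\ii\xi)^2$ coming from $2u_{xx}\partial^3_{z_1xz_1}F+u_{xx}^2\partial^3_{z_1z_1z_1}F+2u_xu_{xx}\partial^3_{z_1z_1z_0}F$ via the chain rule, as you eventually acknowledge in your last paragraph (where the spurious $\tfrac12$ in front of the chain-rule identity should also be dropped), so your first-paragraph claim that ``Weyl bookkeeping alone'' produces the subprincipal term is inconsistent with your own correction. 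Finally, your derivation of \eqref{paralip} ``by monotonicity'' is loose (the natural output of \eqref{ac1} carries $\|v\|_{H^{\sigma+3}_0}$, and the estimate is really needed at $\sigma=s_0$), though the ingredients — Lipschitz/Moser estimates on the coefficients plus Theorem \ref{azione} — are the right ones.
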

\begin{proof}
In the following we use the Bony paraproduct (Lemma \ref{Paraproduct}) and Prop. \ref{compo} and we obtain ($\tilde{R}_0$ is a smoothing remainder satisfying \eqref{R01}, \eqref{R02} and it possibly changes from line to line)
\begin{equation}
\begin{aligned}
 u_{xxx}\partial_{z_1z_1}^2 F&= \opbw(u_{xxx})\partial_{z_1z_1}^2F+\opbw(\partial_{z_1z_1}^2F)\circ\opbw((\ii\xi)^3)u+\tilde{R}_0\\
 					    &= \opbw(u_{xxx})\circ \opbw(\partial_{z_1z_1z_1}^3F)\circ\opbw(\ii\xi)u\\
					    &\quad +\opbw(\partial_{z_1z_1}^2F)\circ\opbw((\ii\xi)^3)u+\tilde{R}_0\\
					    &=\opbw(\partial_{z_1z_1}^2F(\ii\xi)^3)u+\tfrac32 \opbw(\tfrac{d}{dx}(\partial_{z_1z_1}^2F)\xi^2)u+\opbw(\tilde{a}_1(\ii\xi))+\tilde{R}_0,
 \end{aligned}
\end{equation}
where we have denoted by $\tilde{a}_1$ a real function depending on $x,u,u_x,u_{xx},u_{xxx}$. Analogously we obtain
						\begin{align}
2u_{xx}\partial^3_{z_1xz_1}F&= 2\opbw(\partial^3_{z_1z_1x}F(\ii\xi)^2)+\opbw(\tilde{a}_1(\ii\xi))+\tilde{R}_0,\\\label{ord2_2}
u_{xx}^2\partial_{z_1z_1z_1}^3F&=2\opbw(u_{xx}\partial^3_{z_1z_1z_1}F(\ii\xi^2))u+\opbw(\tilde{a}_1(\ii\xi))u+\tilde{R}_0,\\\label{ord2_3}
2u_xu_{xx}\partial_{z_1z_1z_0}^3F&=2\opbw\big(u_x\partial_{z_1z_1z_0}^3F(\ii\xi)^2)u+2\opbw(\tilde{a}_1(\ii\xi))u+\tilde{R}_0,
\end{align}
Summing up the previous equations we get
\begin{equation}\label{KdVpara}
\begin{aligned}
u_t&+\opbw(\partial_{z_1z_1}^2F(\ii\xi)^3)u+\\
&+\tfrac12\opbw(\tfrac{d}{dx}(\partial_{z_1z_1}^2F)(\ii\xi)^2)u+\opbw({a}_1(x,u,u_x,u_{xx},u_{xxx})\ii\xi)u+\tilde{R}_0(u)=0,
\end{aligned}\end{equation}
where ${a}_1$ is real and $R_0$ is a semi-linear remainder satisfying \eqref{R01} and \eqref{R02}.
\end{proof}

\section{Linear local well-posedness}

\begin{proposition}\label{lineare}
Let $s_0>1+1/2$,  $\Theta\geq r>0$, $u\in C^0([0,T];H_0^{s_0+3})\cap C^1([0,T]; H_0^{s_0})$ such that
\begin{equation}\label{teta-r}
\|u\|_{L^{\infty}H_0^{s_0+3}}+\|\partial_t u\|_{H_0^{s_0}}\leq \Theta, \quad \|u\|_{L^{\infty}H_0^{s_0}}\leq r.
\end{equation}
Let $\sigma\geq 0$  and $t\mapsto R(t)\in C^0([0,T],H^{\sigma}_0)$. The there exists an unique solution $v\in C^{0}([0,T];H^{\sigma}_0)\cap C^{1}([0,T];H^{\sigma-3}_0)$ of the linear inhomogeneous problem
\begin{equation}\label{KdVparalin}
\begin{aligned}
&v_t+\opbw(\partial_{z_1z_1}^2F(u,u_x)(\ii\xi)^3)v+\\
+&\tfrac12\opbw(\tfrac{d}{dx}(\partial_{z_1z_1}^2F(u,u_x))(\ii\xi)^2)v+\opbw(\tilde{a}_1(x,u,u_x,u_{xx},u_{xxx})(\ii\xi))v+R(t)=0,\\
&v(0,x)=v_0(x).
\end{aligned}\end{equation}
Moreover the solution satisfies the estimate 
\begin{equation}\label{gromwall}
\|v\|_{L^{\infty}H^{\sigma}_0}\leq e^{C_{\Theta}T}( C_r \|v_0\|_{H^{\sigma}_0}+C_{\Theta}T\|R\|_{L^{\infty}H^{\sigma}_0}).
\end{equation}
 \end{proposition}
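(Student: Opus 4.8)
The plan is to derive the a priori estimate \eqref{gromwall} for solutions of \eqref{KdVparalin} and then to produce such a solution by a regularization argument. Uniqueness follows from \eqref{gromwall}, which (after a routine mollification) holds for every solution in the class $C^0H^\sigma_0\cap C^1H^{\sigma-3}_0$, applied to the difference of two solutions; and the inclusion $v\in C^1([0,T];H^{\sigma-3}_0)$ is read off the equation once $v\in C^0([0,T];H^\sigma_0)$ is known, since $\opbw(A(u))\colon H^\sigma_0\to H^{\sigma-3}_0$ by Theorem \ref{azione} and $R\in C^0H^\sigma_0\subset C^0H^{\sigma-3}_0$.

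The heart of the matter is a modified energy $\mathcal E_\sigma(v)$, built as in \eqref{energia-modi} and equivalent to $\|v\|_{H^\sigma_0}^2$ uniformly for $\|u\|_{L^\infty H^{s_0}_0}\le r$ (with equivalence constants of the form $C_r$), for which one proves along solutions of \eqref{KdVparalin}
\begin{equation*}
\frac{d}{dt}\mathcal E_\sigma(v)\le C_\Theta\,\mathcal E_\sigma(v)+C_r\|R\|_{H^\sigma_0}\sqrt{\mathcal E_\sigma(v)}\,;
\end{equation*}
Gr\"onwall's inequality and the equivalence then give \eqref{gromwall}. Set $\ta:=\partial^2_{z_1z_1}F(u,u_x)$, so $\ta\ge\mathfrak c>0$ by \eqref{ellit}, and write $L=\ta(\ii\xi)^3+\tfrac12\big(\tfrac{d}{dx}\ta\big)(\ii\xi)^2+\tilde a_1(\ii\xi)$ for the symbol in \eqref{KdVparalin}. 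The decisive structural fact, inherited from the Hamiltonian nature of \eqref{KdV-astratta}, is that $L+\overline L=\big(\tfrac{d}{dx}\ta\big)(\ii\xi)^2$: the order-three and order-one components of $L$ are purely imaginary — here one uses essentially that $\tilde a_1$ is \emph{real} — so $\opbw(L)+\opbw(L)^*$ is self-adjoint of order exactly two, with no self-adjoint remainder of lower order. Concretely one may take $\mathcal E_\sigma(v)=\mathrm{Re}\,\langle\opbw(\lambda_\sigma)v,v\rangle+K\|v\|_{H^{\sigma-1}_0}^2$ with $\lambda_\sigma:=\ta^{(2\sigma+1)/3}\langle\xi\rangle^{2\sigma}$ and $K=K(\mathfrak c,r)$ large: $\lambda_\sigma$ is a real, positive, elliptic symbol of order $2\sigma$, and $\mathcal E_\sigma(v)\simeq\|v\|_{H^\sigma_0}^2$ follows from \eqref{ellit} and a G\aa rding-type inequality (writing $\opbw(\lambda_\sigma)=\opbw\big(\ta^{(2\sigma+1)/6}\langle\xi\rangle^\sigma\big)^*\opbw\big(\ta^{(2\sigma+1)/6}\langle\xi\rangle^\sigma\big)$ up to an operator of order $\le 2\sigma-2$). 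Alternatively one first removes the real order-two term via Lemma \ref{key} and runs the argument with the plain norm.

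Differentiating, $\tfrac{d}{dt}\mathcal E_\sigma(v)$ equals $\mathrm{Re}\,\langle\opbw(\partial_t\lambda_\sigma)v,v\rangle-2\,\mathrm{Re}\,\langle\opbw(\lambda_\sigma)\opbw(L)v,v\rangle-2\,\mathrm{Re}\,\langle\opbw(\lambda_\sigma)R,v\rangle+K\tfrac{d}{dt}\|v\|_{H^{\sigma-1}_0}^2$. The $\partial_t\lambda_\sigma$ term is $\le C_\Theta\|v\|_{H^\sigma_0}^2$ by Theorem \ref{azione}, since $\partial_t\lambda_\sigma$ contains $\partial_tu$ and $\partial_tu_x$ and hence lies in $\Gamma^{2\sigma}_{s_0-1}$ with seminorms $\lesssim C(\|u\|_{H^{s_0+1}_0})\|\partial_tu\|_{H^{s_0}_0}\le C_\Theta$ — \emph{this is where the hypothesis $s_0>1+\tfrac12$ enters}, so that symbols whose coefficients are one derivative rougher than the reference regularity remain admissible. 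The $R$ term is $\le C_r\|R\|_{H^\sigma_0}\|v\|_{H^\sigma_0}$ by Theorem \ref{azione}, and $\tfrac{d}{dt}\|v\|_{H^{\sigma-1}_0}^2\le C_\Theta\|v\|_{H^\sigma_0}^2+C_r\|R\|_{H^\sigma_0}\|v\|_{H^{\sigma-1}_0}$ by the same reasoning (here the principal contribution is already of order $2\sigma$, hence harmless). For the main term, Theorem \ref{compo} together with $\opbw(\lambda_\sigma)^*=\opbw(\lambda_\sigma)$ and $\opbw(c)^*=\opbw(\overline c)$ give
\begin{equation*}
-2\,\mathrm{Re}\,\langle\opbw(\lambda_\sigma)\opbw(L)v,v\rangle=-\big\langle\opbw\big(2\,\mathrm{Re}(\lambda_\sigma\#\,L)\big)v,v\big\rangle+\mathcal O\!\big(C_\Theta\|v\|_{H^\sigma_0}^2\big),
\end{equation*}
and, since $\lambda_\sigma$ is real and $L+\overline L=\big(\tfrac{d}{dx}\ta\big)(\ii\xi)^2$,
\begin{equation*}
2\,\mathrm{Re}(\lambda_\sigma\#\,L)=\lambda_\sigma\big(\tfrac{d}{dx}\ta\big)(\ii\xi)^2+\tfrac1{\ii}\big\{\lambda_\sigma,\ta(\ii\xi)^3\big\}+(\text{order}\le 2\sigma).
\end{equation*}
A direct computation shows that the exponent $(2\sigma+1)/3$ is exactly the one for which the two order-$(2\sigma+2)$ terms on the right cancel, while the real/imaginary parity just used rules out any order-$(2\sigma+1)$ term; hence $2\,\mathrm{Re}(\lambda_\sigma\#\,L)$ has order $\le 2\sigma$ and this contribution is again $\mathcal O(C_\Theta\|v\|_{H^\sigma_0}^2)$. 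Collecting the four pieces yields the differential inequality above.

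Existence is then obtained by a standard regularization: e.g. one adds a vanishing-viscosity term $\eps\,\partial_x^4 v_\eps$ to \eqref{KdVparalin}, solves the resulting (non-autonomous) parabolic problem for each $\eps>0$, checks that the energy estimate above holds uniformly in $\eps$ (the viscous term being dissipative, it only improves the estimate after absorbing a lower-order interpolation remainder), extracts a weak-$*$ limit $v\in L^\infty([0,T];H^\sigma_0)$, passes to the limit in the equation, and upgrades $v\in C^0_w([0,T];H^\sigma_0)$ to $v\in C^0([0,T];H^\sigma_0)$ via the continuity of $t\mapsto\mathcal E_\sigma(v(t))$ supplied by the integrated energy identity. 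The principal difficulty is the cancellation in the last display: it depends entirely on the precise form of $A(u)$ furnished by the paralinearization — purely imaginary order-three and order-one symbols, real order-two symbol — that is, on the Hamiltonian structure, and on matching the power of $\partial^2_{z_1z_1}F$ in the energy to it. The other delicate point is the bookkeeping of which Sobolev seminorms of the (rather rough) coefficients enter each estimate, and the verification that they are controlled either by $C(\|u\|_{H^{s_0+3}_0})$, producing the factor $C_\Theta$, or by $C(\|u\|_{H^{s_0}_0})$, producing $C_r$, with $s_0>\tfrac32$ being exactly the threshold that makes first derivatives of the roughest coefficients admissible symbols.
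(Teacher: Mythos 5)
Your proof is essentially correct and rests on the same mechanism as the paper's, but it is organized differently, so a comparison is in order. The paper splits the key cancellation in two: Lemma \ref{key} conjugates by $\opbw(\mathtt{d})$, $\mathtt{d}=(\partial^2_{z_1z_1}F)^{1/6}$, to remove the real order-two symbol (this is where the Hamiltonian structure, i.e.\ the exact coefficient $\tfrac12\tfrac{d}{dx}\partial^2_{z_1z_1}F$, is used), and then the weight $(\partial^2_{z_1z_1}F)^{\frac23\sigma}|\xi|^{2\sigma}$ in \eqref{energia-modi} is chosen so that its Poisson bracket with $\partial^2_{z_1z_1}F\,\xi^3$ vanishes identically. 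You merge the two steps into a single weight $\ta^{(2\sigma+1)/3}\langle\xi\rangle^{2\sigma}$ (note $(2\sigma+1)/3=\tfrac23\sigma+2\cdot\tfrac16$, so the total power of $\ta$ agrees with the paper's) and cancel the order-$(2\sigma+2)$ contribution of the real order-two term against the Poisson bracket of the weight with the skew order-three term; I checked this computation and the exponent is right, and your parity argument excluding an order-$(2\sigma+1)$ term is also correct. For existence the paper does not use vanishing viscosity: it truncates the symbol by $\chi(\epsilon\,\partial^2_{z_1z_1}F\,\xi^3)$ (see \eqref{simboloregolare}), a choice made precisely so that the key Poisson bracket still vanishes \emph{exactly} for every $\epsilon$, solves the resulting bounded ODE, proves the uniform estimate (Prop.\ \ref{energia}), and passes to the limit by Ascoli--Arzel\`a plus Duhamel; your parabolic regularization is a legitimate alternative, but the compatibility of $\epsilon\partial_x^4$ with the variable-coefficient energy (the G\aa rding absorption you mention) is exactly the point that the paper's truncation is designed to avoid, and you leave it unverified.

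Two further caveats. First, you cannot apply Theorem \ref{compo} with $\rho=3$ to $\lambda_\sigma\# L$ with $L$ taken as a whole: the coefficient $\tilde a_1$ depends on $u_{xxx}$ and lies only in $H^{s_0}_0$, so the order-one piece must be treated separately with $\rho=1$ (its symmetrized product vanishes since $\lambda_\sigma$ is real and $\tilde a_1\ii\xi$ purely imaginary), which is what the paper does; your closing remark about bookkeeping suggests you are aware of this, but as written the displayed composition is not licensed by the stated theorem. Second, your aside ``alternatively one first removes the real order-two term via Lemma \ref{key} and runs the argument with the plain norm'' is false: after the conjugation the operator still has the variable-coefficient principal symbol $\partial^2_{z_1z_1}F(\ii\xi)^3$, and the commutator of $\opbw(\langle\xi\rangle^{2\sigma})$ with it produces a real self-adjoint symbol of order $2\sigma+2$ (proportional to $\tfrac{d}{dx}(\partial^2_{z_1z_1}F)\,\xi^{2\sigma+2}$) that cannot be bounded by $\|v\|_{H^\sigma_0}^2$; the $\ta$-dependent weight in the energy is indispensable, not optional. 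Neither issue affects your main line of argument.
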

 
 Consider the equation \eqref{KdVparalin}. We have for any $N\in\mathbb{N},$ $\sigma>1/2$ and $s\geq 0$
 \begin{equation}\label{simboli_stimati}
 \begin{aligned}
&\|\tilde{a}_1(x,u,u_x,u_{xx},u_{xxx})\|_{H^{\sigma}_0}\leq C(\|u\|_{H^{\sigma+3}_0})\\
& \|\tfrac{d}{dx}(\partial_{z_1z_1}^2F(u,u_x))\|_{H^{\sigma-1}_0}\leq C(\|u\|_{H^{\sigma+2}_0}) \\
&\|\partial_{z_1z_1}^2F(u,u_x)\|_{H^{\sigma}_0}\leq C(\|u\|_{H^{\sigma+1}_0}),\\
&|\partial_{z_1z_1}^2F(u,u_x)|\xi|^{2s}|_{2s,\sigma,N}\leq C_N(\|u\|_{H^{\sigma+1}_0}), \,\, |\tfrac{d}{dx}(\partial_{z_1z_1}^2F(u,u_x))(\ii\xi)^2|_{2,\sigma,N}\leq C_N(\|u\|_{H^{\sigma+2}_0}),\\
&|\tilde{a}_1(x,u_x,u_{xx},u_{xxx})|_{1,\sigma, N}\leq C_N(\|u\|_{H^{\sigma+2}_0}).
\end{aligned}
  \end{equation}
  For any $\epsilon>0$ we consider the regularised symbol 
  \begin{equation}\label{simboloregolare}
\begin{aligned} 
 \mathfrak{S}&^{\epsilon}(x,u,\xi):= \\
 &\Big(\partial_{z_1z_1}^2F(u,u_x)(\ii\xi)^3+\tfrac12\tfrac{d}{dx}(\partial_{z_1z_1}^2F(u,u_x))(\ii\xi)^2+\tilde{a}_1(u,u_x,u_{xx},u_{xxx})\ii\xi\Big)\chi(\epsilon \partial_{z_1z_1}^2F(u,u_x) \xi^3 ),
  \end{aligned}\end{equation}
  where $\chi$ is the same cut-off function defined after formula \eqref{seminorme}. We note that thanks to \eqref{simboli_stimati} and to the fact that the function $y\mapsto \xi^{\alpha}\partial_{\xi}^{\alpha}\chi(\epsilon y \xi^3)$ is bounded with its derivatives uniformly in $\epsilon\in (0,1)$ and $\xi\in\R$, the cut-off $\chi^{\epsilon}:=\chi(\epsilon \partial_{z_1z_1}^2F(u,u_x) \xi^3)$ satisfies for any $N\in\N$
  \begin{equation}\label{unieps}
  |\chi^{\epsilon}|_{0,\sigma,N}\leq C(\|u\|_{H^{\sigma+1}_0}), 
  \end{equation}
 uniformly in $\epsilon\in (0,1)$. \\
 In the following lemma we prove that, thanks to the Hamiltonian structure, we may eliminate the symbol of order two by means of a paradifferential change of variable. This term is the only one which has positive order and that is not skew-selfadjoint.
\begin{lemma}\label{key}
Define $\mathtt{d}(x,u,u_x):=\sqrt[6]{\partial^2_{z_1z_1}F(x,u,u_x)}$. Then we have  
\begin{equation}\label{parapa}
\begin{aligned}
\opbw(\mathtt{d})\circ&\opbw\Big(\chi\big(\epsilon \partial_{z_1z_1}^2F(u,u_x) \xi^3\big) \big(\partial^2_{z_1z_1}F(\ii\xi)^3+\tfrac12\tfrac{d}{dx}(\partial_{z_1z_1}^2F)(\ii\xi)^2\big)\Big)\circ \opbw(\mathtt{d}^{-1})v=\\
&\opbw\big(\chi\big(\epsilon \partial_{z_1z_1}^2F(u,u_x) \xi^3\big) \big[\partial^2_{z_1z_1}F(\ii\xi)^3+\tilde{a}_1(x,u,u_x,u_{xx},u_{xxx})(\ii\xi)\big]\big)v+R_0,\\
\end{aligned}\end{equation}
where $\tilde{a}_1$ is a real function and $R_0$ is a semilinear remainder verifying \eqref{R01}, \eqref{R02}, \eqref{R03}.
\end{lemma}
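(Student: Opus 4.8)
The plan is to compute the conjugation $\opbw(\mathtt{d})\circ\opbw(p)\circ\opbw(\mathtt{d}^{-1})$ by repeated application of the symbolic calculus in Theorem \ref{compo}, keeping track of orders carefully enough to isolate the second-order term and show it cancels. Here $p:=\chi^{\epsilon}\big(\partial^2_{z_1z_1}F(\ii\xi)^3+\tfrac12\tfrac{d}{dx}(\partial_{z_1z_1}^2F)(\ii\xi)^2\big)$ is a symbol of order $3$ whose leading part is $\partial^2_{z_1z_1}F(\ii\xi)^3$, the symbols $\mathtt{d},\mathtt{d}^{-1}$ are of order $0$, and the relevant remainders are all of order $\le 0$, hence semilinear. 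Since $\mathtt{d}=(\partial^2_{z_1z_1}F)^{1/6}$ has the same regularity as $\partial^2_{z_1z_1}F$, the bounds \eqref{simboli_stimati}, \eqref{unieps} guarantee that all seminorms appearing are controlled by $C(\|u\|_{H^{s_0+3}_0})$ and that the remainders $R_0$ obey \eqref{R01}--\eqref{R03}; the Lipschitz dependence \eqref{R02}, \eqref{R03} follows because every symbol depends on $u$ through $\partial^2_{z_1z_1}F$ and $\tilde a_1$, which are smooth (polynomial) functions of $u,u_x,u_{xx},u_{xxx}$, so differences are handled by the mean value theorem exactly as in \cite{BMM1}.

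Concretely, I would first compute $\opbw(p)\circ\opbw(\mathtt{d}^{-1})$ with $\rho=3$: by Theorem \ref{compo} this equals $\opbw\big(p\,\mathtt{d}^{-1}+\tfrac1{2\ii}\{p,\mathtt{d}^{-1}\}-\tfrac18\mathfrak{s}(p,\mathtt{d}^{-1})\big)+R^{-3}$, where $R^{-3}$ maps $H^s_0\to H^{s}_0$ (gain $3-3=0$) and is thus a semilinear remainder. Then I would compose on the left with $\opbw(\mathtt{d})$, again with $\rho=3$, expanding $\mathtt{d}\#_3(p\mathtt{d}^{-1}+\cdots)$. Collecting terms by homogeneity in $\xi$: the order-three term is simply $\mathtt{d}\cdot p\,\mathtt{d}^{-1}=p$ restricted to its leading part, i.e. $\chi^\epsilon\partial^2_{z_1z_1}F(\ii\xi)^3$ (the factors $\mathtt{d},\mathtt{d}^{-1}$ cancel pointwise); the order-two term collects the contribution $\tfrac12\chi^\epsilon\tfrac{d}{dx}(\partial^2_{z_1z_1}F)(\ii\xi)^2$ from $p$ itself, together with the two Poisson-bracket contributions $\tfrac1{2\ii}\{\mathtt{d}, \partial^2_{z_1z_1}F(\ii\xi)^3\mathtt{d}^{-1}\}$ and $\tfrac1{2\ii}\mathtt{d}\{\partial^2_{z_1z_1}F(\ii\xi)^3,\mathtt{d}^{-1}\}$ (times $\chi^\epsilon$, up to lower order in which $\chi^\epsilon$ can be differentiated harmlessly since $\partial_x\chi^\epsilon$ is $O(1)$ in the relevant seminorms uniformly in $\epsilon$). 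The key computation is that
\[
\tfrac1{2\ii}\Big(\{\mathtt{d}, \partial^2_{z_1z_1}F\,\mathtt{d}^{-1}\}+\mathtt{d}\{\partial^2_{z_1z_1}F,\mathtt{d}^{-1}\}\Big)(\ii\xi)^2\;=\;-\tfrac12\,\tfrac{d}{dx}\big(\partial^2_{z_1z_1}F\big)(\ii\xi)^2 ,
\]
so that these terms exactly cancel the order-two term carried by $p$. This identity is checked by writing $\partial_\xi\big((\ii\xi)^3\big)=3\ii(\ii\xi)^2$ and using the product and chain rules with $\mathtt{d}^6=\partial^2_{z_1z_1}F$; indeed the bracket contributions produce a term proportional to $\mathtt{d}\,\partial_x(\partial^2_{z_1z_1}F\,\mathtt{d}^{-1})-\mathtt{d}\,\partial^2_{z_1z_1}F\,\partial_x(\mathtt{d}^{-1})=\mathtt{d}\,\mathtt{d}^{-1}\,\partial_x(\partial^2_{z_1z_1}F)=\partial_x(\partial^2_{z_1z_1}F)$, with the numerical coefficient $\tfrac{3\ii}{2\ii}=\tfrac32$ combining with the $\tfrac12$ from $p$; one must take care of the sign and of the factor coming from $\{\cdot,\cdot\}=\partial_\xi a\,\partial_x b-\partial_x a\,\partial_\xi b$, but this is exactly the cancellation that the Hamiltonian/selfadjoint structure is designed to produce. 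Everything of order $\le 1$ — the $\mathfrak{s}$ terms, the lower-order pieces of the Poisson brackets, derivatives hitting $\chi^\epsilon$, and the remainders $R^{-3}$ — is absorbed either into the real symbol $\tilde a_1(\ii\xi)$ of order $1$ (noting all these lower-order symbols are indeed real, since $\partial^2_{z_1z_1}F$, $\mathtt{d}$ are real and each surviving term carries exactly one factor $\ii\xi$ after the cancellation) or into the semilinear remainder $R_0$ of order $0$.

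The main obstacle, and the only place requiring genuine care rather than bookkeeping, is the order-two cancellation identity above: one has to expand the two Poisson brackets correctly, track the $\ii$'s and signs, and verify that the derivative of $\chi^\epsilon$ does not spoil the cancellation at order two (it does not, because $\partial_x\chi^\epsilon$ lowers the $\xi$-order by one relative to its argument's scaling, contributing only to $\tilde a_1$, and its seminorms are uniformly bounded in $\epsilon$ by \eqref{unieps}). A secondary point is to confirm that the reality of $\tilde a_1$ is preserved: this holds because the genuine symbolic-calculus corrections at order one are real combinations of real symbols, and the only source of imaginary contributions, the Poisson brackets paired with odd powers of $\ii\xi$, land at even order (order two) where they cancel, or at order zero where they are dumped into $R_0$. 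Once the algebra is done, the estimates \eqref{R01}--\eqref{R03} for $R_0$ follow verbatim from \eqref{resto}, \eqref{simboli_stimati}, \eqref{unieps} and the smoothness of $F$, uniformly in $\epsilon\in(0,1)$.
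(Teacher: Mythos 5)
Your strategy is the same as the paper's: expand the conjugation $\opbw(\mathtt{d})\circ\opbw(p)\circ\opbw(\mathtt{d}^{-1})$ with the symbolic calculus of Theorem \ref{compo} and check that the order-two symbol cancels, sending everything of order $\leq 1$ into $\tilde a_1(\ii\xi)$ and $R_0$; and the identity you display for the bracket contributions (equal to $-\tfrac12\tfrac{d}{dx}(\partial^2_{z_1z_1}F)(\ii\xi)^2$) is indeed the correct one. However, the computation you offer to justify it --- the one step you yourself single out as requiring genuine care --- is wrong as written, and followed literally it would not yield the cancellation. Since $\mathtt{d},\mathtt{d}^{-1}$ do not depend on $\xi$, the only first-order corrections are $\tfrac{1}{2\ii}\,\mathtt{d}\,\partial_\xi(\chi^{\epsilon}P)\,\partial_x(\mathtt{d}^{-1})$ (bracket of the inner composition, then multiplied by $\mathtt{d}$) and $-\tfrac{1}{2\ii}\,\partial_x\mathtt{d}\,\partial_\xi(\chi^{\epsilon}P\,\mathtt{d}^{-1})$ (bracket of the outer composition), with $P=\partial^2_{z_1z_1}F(\ii\xi)^3$; a term of the form $\mathtt{d}\,\partial_x(\partial^2_{z_1z_1}F\,\mathtt{d}^{-1})$ never occurs, because $\{\mathtt{d},\cdot\}=-\partial_x\mathtt{d}\,\partial_\xi(\cdot)$. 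At order two these two corrections give
\begin{equation*}
\tfrac32\,\chi^{\epsilon}\,\partial^2_{z_1z_1}F\,(\ii\xi)^2\big(\mathtt{d}\,\partial_x\mathtt{d}^{-1}-\mathtt{d}^{-1}\partial_x\mathtt{d}\big)
=-3\,\chi^{\epsilon}\,\partial^2_{z_1z_1}F\,\mathtt{d}^{-1}\partial_x\mathtt{d}\,(\ii\xi)^2,
\end{equation*}
and only now does the sixth root enter: $\mathtt{d}^{-1}\partial_x\mathtt{d}=\tfrac16\,\partial_x\log(\partial^2_{z_1z_1}F)$, so the sum equals $-\tfrac12\,\chi^{\epsilon}\,\tfrac{d}{dx}(\partial^2_{z_1z_1}F)(\ii\xi)^2$ and kills the $+\tfrac12$ term carried by $p$. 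In your bookkeeping the factors $\mathtt{d},\mathtt{d}^{-1}$ cancel pointwise, the exponent $1/6$ plays no role, and the coefficient you extract ($\tfrac32$) does not match the $\tfrac12$ it must cancel; with a general power $\mathtt{d}=(\partial^2_{z_1z_1}F)^{\alpha}$ your computation would predict the cancellation for every $\alpha$, which is false (it holds only for $\alpha=1/6$). Since this is exactly the structural content of the lemma, the verification has to be redone as above; this is also what the paper's proof records, namely that the conjugation produces $-\tfrac12\chi_{\epsilon}\tfrac{d}{dx}(\partial^2_{z_1z_1}F)\xi^2$ plus $3\chi_{\epsilon}\,\mathtt{d}^{-1}\tfrac{d}{dx}\mathtt{d}\;\partial^2_{z_1z_1}F\,\xi^2$, which vanish together.

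Two further points. Writing $\{\mathtt{d},\partial^2_{z_1z_1}F\,\mathtt{d}^{-1}\}$ between $\xi$-independent symbols is literally zero, so state the brackets with the full cubic symbol inside. More importantly, your argument that the cutoff is harmless only discusses $\partial_x\chi^{\epsilon}$, whereas the derivative that actually appears in both brackets is $\partial_\xi\chi^{\epsilon}$, and $P\,\partial_\xi\chi^{\epsilon}$ is a symbol of order two uniformly in $\epsilon$; its two contributions again combine through $\mathtt{d}\,\partial_x\mathtt{d}^{-1}-\mathtt{d}^{-1}\partial_x\mathtt{d}$ and must be accounted for explicitly (the paper's proof also treats this point very tersely, but if you invoke the cutoff you should at least identify the correct term). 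Your choice $\rho=3$, including the $\mathfrak{s}$ correction, is fine and in fact what is needed to make $R_0$ a genuinely order-zero remainder satisfying \eqref{R01}--\eqref{R03}.
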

\begin{proof}
First of all the function $\mathtt{d}(x,u,u_x)$ is well defined because of hypothesis \eqref{ellit}.
We recall formula \eqref{cancelletto} (and the definition of the Poisson's bracket after \eqref{cancelletto}) and we denote $\chi_{\epsilon}:=\chi(\epsilon \partial_{z_1z_1}^2F(u,u_x) \xi^3)$. By using Theorem \ref{compo} with  $\rho\in(1,2]$ we obtain that the l.h.s. of the equation \eqref{parapa} equals
\begin{equation*}
\begin{aligned}
-\opbw&(\ii\chi_{\epsilon}\partial_{z_1z_1}F\xi^3)v-\frac12\opbw(\chi_{\epsilon}\tfrac{d}{dx}(\partial_{z_1z_1}^2F)\xi^2)v\\
&+3\opbw\Big(\chi_{\epsilon}\cdot\mathtt{d}^{-1}\cdot\tfrac{d}{dx} \mathtt{d}\cdot \partial_{z_1z_1}^2F\cdot\xi^2\Big)v+\opbw(\tilde{a}_1)+R_0,
\end{aligned}\end{equation*}
where $\tilde{a}_1$ is a purely imaginary function and $R_0$ a semilinear remainder.
One can verify the symbol of order two equals to zero by direct inspection.
\end{proof}
 
We introduce the smoothed version of the homogeneous part of \eqref{KdVparalin}, more precisely
\begin{equation}\label{KdVparalin-smooth}
\partial_tv^{\epsilon}=\opbw(\mathfrak{S}^{\epsilon}(x,u,u_x,u_{xx},u_{xxx};\xi))v^{\epsilon},
\end{equation} 
where  $\mathfrak{S}^{\epsilon}$ has been defined in \eqref{simboloregolare}. The operator is bounded, as a consequence for any $\epsilon>0$ there exists a unique solution of the equation \eqref{KdVparalin-smooth} which is $C^{2}([0,T],H^{\sigma}_0)$ for any $\sigma\geq 1$. Such an equation verifies \emph{a priori} estimates with constants independent of $\epsilon$.
 We have the following.
\begin{proposition}\label{energia}
Let $u$ be a function as in \eqref{teta-r}. For any $\sigma\geq 0$ there exist constants $C_{\Theta}$ and $C_r$, such that for any $\epsilon>0$ the unique solution of \eqref{KdVparalin} verifies
\begin{equation}\label{energiaene}
\|v^{\epsilon}\|_{H^{\sigma}_0}^2\leq C_r\|v_0\|_{H^{\sigma}_0}^2+C_{\Theta}\int_0^t\|v^{\epsilon}(\tau)\|_{H^{\s}_0}^2d\tau, \forall t\in [0,T].
\end{equation}
As a consequence we have
\begin{equation}\label{energiaene2}
\|v^{\epsilon}\|_{H^{\sigma}_0}\leq C_r e^{TC_{\theta}}\|v_0\|_{H^{\s}_0}, \forall t\in[0,T].
\end{equation}
\end{proposition}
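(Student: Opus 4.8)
The plan is to prove the energy estimate \eqref{energiaene} for the smoothed equation \eqref{KdVparalin-smooth} by constructing a \emph{modified energy} which is equivalent to $\|v^\epsilon\|_{H^\sigma_0}^2$ and whose time derivative is controlled by itself. The obvious choice $\|v^\epsilon\|_{H^\sigma_0}^2 = \la \opbw(|\xi|^\sigma) v^\epsilon, \opbw(|\xi|^\sigma)v^\epsilon\ra$ will not close directly, because differentiating in time and using \eqref{KdVparalin-smooth} produces the term $2\Re\la \opbw(|\xi|^\sigma)\opbw(\mathfrak{S}^\epsilon) v^\epsilon, \opbw(|\xi|^\sigma) v^\epsilon\ra$; the part coming from the order-three symbol $\partial^2_{z_1z_1}F(\ii\xi)^3$ is skew-selfadjoint up to bounded error (by Theorem \ref{compo} the self-adjoint correction lands in a lower order class), but the order-two symbol $\tfrac12\tfrac{d}{dx}(\partial^2_{z_1z_1}F)(\ii\xi)^2$ is \emph{real}, hence selfadjoint, and so contributes a term of order $\sigma+2$ which cannot be absorbed. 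This is exactly the obstruction removed by Lemma \ref{key}.

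\textbf{Step 1: conjugation.} First I would set $w^\epsilon := \opbw(\mathtt{d}) v^\epsilon$ with $\mathtt{d} = \sqrt[6]{\partial^2_{z_1z_1}F}$ as in Lemma \ref{key}, noting that $\opbw(\mathtt d)$ is invertible up to smoothing (its symbol is elliptic of order $0$ thanks to \eqref{ellit}, and one inverts it by a Neumann-type argument using Theorem \ref{compo}), so that $\|w^\epsilon\|_{H^\sigma_0} \sim_r \|v^\epsilon\|_{H^\sigma_0}$ with constants depending only on $\|u\|_{H^{s_0}_0} \le r$ (this is where $C_r$ enters, from the seminorm bounds \eqref{simboli_stimati} and \eqref{unieps}). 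Conjugating \eqref{KdVparalin-smooth} by $\opbw(\mathtt d)$ and invoking Lemma \ref{key} gives
\begin{equation*}
\partial_t w^\epsilon = \opbw\big(\chi^\epsilon[\partial^2_{z_1z_1}F(\ii\xi)^3 + \tilde a_1(\ii\xi)]\big) w^\epsilon + R,
\end{equation*}
where $\tilde a_1$ is real of order $1$ and $R$ is a semilinear remainder with $\|R\|_{H^\sigma_0} \le C_\Theta \|w^\epsilon\|_{H^\sigma_0}$ (using \eqref{R01} together with the a priori bounds \eqref{teta-r} on $u$, which control $C_\Theta$ via $\|u\|_{L^\infty H^{s_0+3}_0}+\|\partial_t u\|_{H^{s_0}_0}\le\Theta$). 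The terms that were dropped in passing from $v^\epsilon$ to $w^\epsilon$ (the time derivative $\partial_t\opbw(\mathtt d)$ hitting $v^\epsilon$, and the smoothing error of the conjugation) are likewise of lower order and absorbed into $R$, the $\partial_t$ one costing one power of $\partial_t u$ and hence $C_\Theta$.

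\textbf{Step 2: energy estimate for $w^\epsilon$.} Now I would compute $\frac{d}{dt}\|w^\epsilon\|_{H^\sigma_0}^2 = 2\Re\la \partial_t w^\epsilon, w^\epsilon\ra_{H^\sigma_0}$. For the principal part we use that $\opbw(|\xi|^\sigma)\opbw(\chi^\epsilon\partial^2_{z_1z_1}F(\ii\xi)^3)\opbw(|\xi|^{-\sigma})$ differs from $\opbw(\chi^\epsilon\partial^2_{z_1z_1}F(\ii\xi)^3)$ by an operator of order $3-1=2$ modulo the usual symbolic-calculus remainders (Theorem \ref{compo}); and $\opbw(\chi^\epsilon\partial^2_{z_1z_1}F(\ii\xi)^3)$ has purely imaginary principal symbol, so $\opbw(\chi^\epsilon\partial^2_{z_1z_1}F(\ii\xi)^3) + \opbw(\chi^\epsilon\partial^2_{z_1z_1}F(\ii\xi)^3)^* = \opbw(r_2)$ with $r_2 \in \Gamma^2_{s_0}$ — this is precisely the computation that $\{a,b\}$ is antisymmetric so the first correction in $\#_\rho$ cancels in $a\#a^* $... more carefully, the selfadjoint part of $\opbw(\ii\chi^\epsilon\partial^2_{z_1z_1}F\xi^3)$ is $\opbw$ of a symbol of order $\le 2$. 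The key point is that \emph{no order-two selfadjoint term survives} after Step 1: the real order-one symbol $\tilde a_1(\ii\xi)$ gives, after the same conjugation by $\opbw(|\xi|^\sigma)$, a selfadjoint operator of order $1$, bounded $H^\sigma_0\to H^{\sigma-1}_0$, hence $|\la\opbw(\tilde a_1\ii\xi)\text{-part}\ra|\lesssim\|w^\epsilon\|_{H^{\sigma-1/2}_0}^2\lesssim\|w^\epsilon\|_{H^\sigma_0}^2$ with constant $C_\Theta$; and the residual order-two symbols $r_2$ likewise give $|\la\opbw(r_2)w^\epsilon,w^\epsilon\ra_{H^\sigma_0}|\lesssim\|w^\epsilon\|_{H^{\sigma+1}_0}\|w^\epsilon\|_{H^{\sigma-1}_0}\lesssim\|w^\epsilon\|_{H^\sigma_0}^2$ by interpolation, with constant $C_\Theta$ via \eqref{simboli_stimati}. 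Crucially all seminorm bounds are uniform in $\epsilon$ by \eqref{unieps}. Together with the $R$-term (Cauchy–Schwarz: $|\Re\la R,w^\epsilon\ra_{H^\sigma_0}|\le C_\Theta\|w^\epsilon\|_{H^\sigma_0}^2$), this yields $\frac{d}{dt}\|w^\epsilon\|_{H^\sigma_0}^2 \le C_\Theta\|w^\epsilon\|_{H^\sigma_0}^2$, hence after integrating in time and translating back to $v^\epsilon$ via the equivalence of Step 1, we get \eqref{energiaene}. Finally \eqref{energiaene2} is Grönwall's inequality applied to \eqref{energiaene}.

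\textbf{Main obstacle.} The delicate point is the bookkeeping of orders in Step 2: one must check that, after the conjugation of Lemma \ref{key}, every term other than the manifestly skew-adjoint $\opbw(\chi^\epsilon\partial^2_{z_1z_1}F(\ii\xi)^3)$ is either of order $\le 2$ and \emph{selfadjoint-error-controllable by interpolation}, or of order $\le 1$, so that its pairing against $w^\epsilon$ in $H^\sigma_0$ is bounded by $\|w^\epsilon\|_{H^\sigma_0}^2$. The temptation to worry about an order-$5/2$ leftover is dispelled by the fact that the symbolic calculus of Theorem \ref{compo} gains integer orders and the principal order-three symbol is imaginary, so its adjoint defect is genuinely of order two, and a order-two selfadjoint contribution to $\frac{d}{dt}\|w\|_{H^\sigma}^2$ of the form $\la\opbw(r_2)w,w\ra_{H^\sigma}$ is $\lesssim\|w\|_{H^{\sigma+1}}\|w\|_{H^{\sigma-1}}$, which interpolates to $\|w\|_{H^\sigma}^2$ — it is \emph{not} $\|w\|_{H^{\sigma+1}}^2$, so it closes. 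The uniformity in $\epsilon$, guaranteed by \eqref{unieps} and the boundedness of $y\mapsto\xi^\alpha\partial_\xi^\alpha\chi(\epsilon y\xi^3)$, must be tracked throughout but presents no real difficulty.
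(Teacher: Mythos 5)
Your Step 1 (conjugating by $\opbw(\mathtt{d})$ and invoking Lemma \ref{key} to kill the selfadjoint order-two symbol) is exactly the paper's starting point, but Step 2 contains a genuine gap at the crucial place. After the conjugation you work with the plain Sobolev energy $\|w^\epsilon\|_{H^\sigma_0}^2$, and the term you must control is $2\Re\langle \opbw(\chi^\epsilon\partial^2_{z_1z_1}F(\ii\xi)^3)w^\epsilon,w^\epsilon\rangle_{H^\sigma_0}$. Since $\opbw$ of a purely imaginary symbol is skew-adjoint, this reduces to the commutator $[\opbw(|\xi|^{2\sigma}),\opbw(\chi^\epsilon\partial^2_{z_1z_1}F(\ii\xi)^3)]$, whose principal symbol is the Poisson bracket $\{|\xi|^{2\sigma},\chi^\epsilon\partial^2_{z_1z_1}F\,\xi^3\}=2\sigma|\xi|^{2\sigma-2}\xi^{4}\,\partial_x\big(\chi^\epsilon\partial^2_{z_1z_1}F\big)$: a \emph{real}, generically nonzero symbol of order $2\sigma+2$. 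Its quadratic form is of size $\|w^\epsilon\|_{H^{\sigma+1}_0}^2$, i.e.\ it loses one derivative, and your attempt to absorb it via ``$\lesssim\|w\|_{H^{\sigma+1}}\|w\|_{H^{\sigma-1}}\lesssim\|w\|_{H^\sigma}^2$ by interpolation'' is based on a false inequality: interpolation gives $\|w\|_{H^\sigma}^2\le\|w\|_{H^{\sigma+1}}\|w\|_{H^{\sigma-1}}$, i.e.\ the \emph{reverse} bound (take $w$ with one low and one high frequency to see the ratio $\|w\|_{H^{\sigma+1}}\|w\|_{H^{\sigma-1}}/\|w\|_{H^\sigma}^2$ can be made arbitrarily large). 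So with the plain $H^\sigma$ energy the estimate simply does not close; Lemma \ref{key} alone does not remove this obstruction, because the second-order term it eliminates and the commutator term above are different objects.

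This is precisely why the paper does not use $\|\opbw(\mathtt{d})v^\epsilon\|_{H^\sigma_0}^2$ but the modified energy \eqref{energia-modi}, with the weight $(\partial^2_{z_1z_1}F)^{\frac23\sigma}|\xi|^{2\sigma}$ in place of $|\xi|^{2\sigma}$. Both this weight and the cut-off $\chi^\epsilon=\chi(\epsilon\,\partial^2_{z_1z_1}F\,\xi^3)$ are functions of the single quantity $\partial^2_{z_1z_1}F\,\xi^3$, hence the Poisson bracket $\{(\partial^2_{z_1z_1}F)^{\frac23\sigma}|\xi|^{2\sigma},\,\chi^\epsilon\partial^2_{z_1z_1}F\,\xi^3\}$ vanishes \emph{identically}; by Theorem \ref{compo} with $\rho=3$ (and $\rho=1$ for the $\tilde a_1(\ii\xi)$ part) the commutator \eqref{commutatore} is then of order $2\sigma$, so its pairing is bounded by $C_\Theta\|v^\epsilon\|_{H^\sigma_0}^2$, uniformly in $\epsilon$. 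The price of the nonstandard weight is the equivalence lemma \eqref{equivalenza} (which produces the constants $C_r$ and the harmless $\|v\|_{H^{-3}_0}^2$ correction), after which integration in time gives \eqref{energiaene} and Gr\"onwall gives \eqref{energiaene2}. To repair your argument you must either adopt this adapted weight (equivalently, a further conjugation flattening the principal symbol) or find some other mechanism that cancels the order-$(2\sigma+2)$ real commutator symbol; the interpolation step as written cannot do it.
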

 We define the modified energy
\begin{equation}\label{energia-modi} 
\begin{aligned}
\|&v\|_{\s,u}^2:=\\
&\left\langle\opbw\left((\partial^2_{z_1z_1}F(x,u,u_x))^{\frac23 \s}|\xi|^{2\sigma}\right)\opbw\left(\mathtt{d}(x,u,u_x)\right)v,\opbw\left(\mathtt{d}(x,u,u_x)\right)v\right\rangle_{L^2},
\end{aligned}
\end{equation}
where $\langle\cdot,\cdot\rangle$ is the standard scalar product on $L^2(\R)$ and $\mathtt{d}$ is defined in Lemma \ref{key}, note that the function $(\partial^2_{z_1z_1}F(x,u,u_x))^{\frac23 \s}$ is well defined for any $\sigma\in\R$ thanks to \eqref{ellit}.\\
In the following we prove that $\|\cdot\|_{\s,u}$ is equivalent to $\|\cdot\|_{H_0^{\s}}$.
\begin{lemma}
Let $s_0>1/2,\sigma\geq 0, r\geq0$. The there exists a constant (depending on $r$ and $\sigma$) such that for any $u$ such that $\|u\|_{H^{s_0}_0}\leq r$ we have
\begin{equation}\label{equivalenza}
C_r^{-1}\|v\|_{H^{\sigma}_0}^2-\|v\|^2_{H^{-3}_0}\leq\|v\|_{\sigma,u}^2\leq C_r\|v\|_{H^{\sigma}_0}^2\end{equation}
for any $v$ in ${H^{\sigma}_0}$.
\end{lemma}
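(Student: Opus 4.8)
The plan is to show the two-sided bound \eqref{equivalenza} by composing the Bony--Weyl operators inside the quadratic form \eqref{energia-modi} and reducing everything to the action of a single paradifferential operator whose symbol is, up to lower order, $|\xi|^{2\sigma}$. First I would use Theorem \ref{compo} (with $\rho=3$, say) to write $\opbw(\mathtt{d})^*\opbw((\partial^2_{z_1z_1}F)^{\frac23\sigma}|\xi|^{2\sigma})\opbw(\mathtt{d})=\opbw(b)+R$, where $b$ is the symbol obtained from the $\#_\rho$-composition and $R$ is a smoothing operator of order $2\sigma-3$; here one uses that $\opbw(\mathtt{d})^*=\opbw(\bar{\mathtt{d}})=\opbw(\mathtt{d})$ since $\mathtt{d}$ is real and independent of $\xi$ (so the Weyl quantization is self-adjoint). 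The leading term of $b$ is $\mathtt{d}^2\,(\partial^2_{z_1z_1}F)^{\frac23\sigma}|\xi|^{2\sigma}=(\partial^2_{z_1z_1}F)^{\frac13+\frac23\sigma}|\xi|^{2\sigma}$, which is a positive symbol of order $2\sigma$; all corrections coming from $\{\cdot,\cdot\}$ and $\mathfrak{s}(\cdot,\cdot)$ have order $\leq 2\sigma-1$, and their seminorms are controlled by $C(\|u\|_{H^{s_0}_0})$ via the estimates \eqref{simboli_stimati} (or rather the chain-rule / composition estimates for nonlinear functions of $u,u_x$ that underlie them). Hence
\begin{equation}\label{eq:plan-split}
\|v\|_{\sigma,u}^2=\big\langle \opbw\big((\partial^2_{z_1z_1}F)^{\frac13+\frac23\sigma}|\xi|^{2\sigma}\big)v,v\big\rangle_{L^2}+\big\langle\opbw(r_{2\sigma-1})v,v\big\rangle_{L^2}+\langle Rv,v\rangle_{L^2},
\end{equation}
with $r_{2\sigma-1}\in\Gamma^{2\sigma-1}_{s_0}$ and $R$ of order $2\sigma-3$.

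Next I would handle the three pieces of \eqref{eq:plan-split} separately. For the main term, set $p(x,\xi):=(\partial^2_{z_1z_1}F(x,u,u_x))^{\frac13+\frac23\sigma}|\xi|^{2\sigma}$; by \eqref{ellit} there are $0<c_1(r)\le c_2(r)$ with $c_1\langle\xi\rangle^{2\sigma}\le p+{\rm l.o.t.}\le c_2\langle\xi\rangle^{2\sigma}$ pointwise. The clean way to get the two-sided bound is to write, for the upper bound, $p=p_1^2$ with $p_1:=(\partial^2_{z_1z_1}F)^{\frac16+\frac13\sigma}|\xi|^{\sigma}\in\Gamma^{\sigma}_{s_0}$ and use $\opbw(p)=\opbw(p_1)^*\opbw(p_1)+(\text{order }2\sigma-1)$ (again Theorem \ref{compo}), so that $\langle\opbw(p)v,v\rangle\le \|\opbw(p_1)v\|_{L^2}^2+C_r\|v\|_{H^{\sigma-1/2}_0}^2\lesssim C_r\|v\|_{H^\sigma_0}^2$ by Theorem \ref{azione}. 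For the lower bound I would use a G\aa rding-type argument: write $p=q^2+\tilde p$ where $q:=\sqrt{c_1}\,\langle\xi\rangle^{\sigma}$ is a fixed elliptic symbol and $\tilde p=p-q^2\ge 0$ is a nonnegative symbol in $\Gamma^{2\sigma}_{s_0}$; then $\opbw(\tilde p)=\opbw(\sqrt{\tilde p})^*\opbw(\sqrt{\tilde p})+(\text{order }2\sigma-1)$ gives $\langle\opbw(\tilde p)v,v\rangle\ge -C_r\|v\|^2_{H^{\sigma-1/2}_0}$, while $\langle\opbw(q^2)v,v\rangle=c_1\|\langle D\rangle^\sigma v\|_{L^2}^2\gtrsim c_1\|v\|_{H^\sigma_0}^2$ exactly. (One must check $\sqrt{\tilde p}\in\Gamma^\sigma_{s_0}$; this is where smoothness in $x$ via $u\in H^{s_0}$, $s_0>1/2$, and boundedness away from the degeneracy of $\langle\xi\rangle^{2\sigma}$ are used — if $\tilde p$ vanishes one replaces it by $\tilde p+\delta\langle\xi\rangle^{2\sigma}$ for small $\delta$ absorbed into $c_1$.)

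Finally, the remaining terms in \eqref{eq:plan-split}: the operator $\opbw(r_{2\sigma-1})$ maps $H^\sigma_0\to H^{-\sigma+1}_0$ with norm $\le C_r$ by \eqref{ac1}, hence $|\langle\opbw(r_{2\sigma-1})v,v\rangle|\le C_r\|v\|_{H^\sigma_0}\|v\|_{H^{\sigma-1}_0}\le \tfrac12 C_r^{-1}\|v\|_{H^\sigma_0}^2+C_r'\|v\|^2_{H^{\sigma-1}_0}$ by interpolation and Young; and similarly $|\langle Rv,v\rangle|\le C_r\|v\|_{H^\sigma_0}\|v\|_{H^{\sigma-3}_0}\le\tfrac12 C_r^{-1}\|v\|^2_{H^\sigma_0}+C_r'\|v\|^2_{H^{\sigma-3}_0}$ by \eqref{resto}. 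Collecting the main-term lower bound with these error bounds, the $\tfrac12 C_r^{-1}\|v\|_{H^\sigma_0}^2$ losses are absorbed into the $c_1(r)\|v\|_{H^\sigma_0}^2$ gain, and the negative lower-order errors combine into a single $-C_r\|v\|^2_{H^{-3}_0}$ term after a crude interpolation $\|v\|_{H^{\sigma-1}_0}^2\le \eta\|v\|_{H^\sigma_0}^2+C_\eta\|v\|_{H^{-3}_0}^2$; this yields the left inequality in \eqref{equivalenza}. The right inequality follows directly from the upper bound on the main term together with the crude bounds on the errors (now with a plus sign), since all of them are $\lesssim C_r\|v\|_{H^\sigma_0}^2$.

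\smallskip

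\noindent\emph{Main obstacle.} The delicate point is the lower bound, i.e.\ the G\aa rding-type inequality for $\opbw(p)$: one needs to write a nonnegative symbol of the critical order $2\sigma$ as an (approximate) square $\opbw(\sqrt{\tilde p})^*\opbw(\sqrt{\tilde p})$ with an error of order strictly lower by one full unit, and to ensure that $\sqrt{\tilde p}$ genuinely lies in $\Gamma^\sigma_{s_0}$ with seminorms controlled by $\|u\|_{H^{s_0}_0}$. This requires the uniform ellipticity \eqref{ellit} (so that the positive symbol $(\partial^2_{z_1z_1}F)^{\ast}$ is bounded below and its real powers are smooth in $(u,u_x)$) and care in isolating the fixed elliptic part $c_1\langle\xi\rangle^{2\sigma}$ whose square root is trivially a good symbol. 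Everything else is bookkeeping with Theorems \ref{azione} and \ref{compo} and routine interpolation.
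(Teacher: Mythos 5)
Your route is genuinely different from the paper's: you reduce the quadratic form to a single operator $\opbw\big((\partial^2_{z_1z_1}F)^{\frac13+\frac23\sigma}|\xi|^{2\sigma}\big)$ plus lower-order corrections and then prove the lower bound by a G\aa rding-type argument (split off a constant elliptic part $c_1\langle\xi\rangle^{2\sigma}$ and take a paradifferential square root of the nonnegative remainder). The paper instead never invokes positivity of a composed symbol: it uses the ellipticity \eqref{ellit} to build approximate inverses, composing with $\opbw(\mathtt d^{-1})$ and $\opbw((\partial^2_{z_1z_1}F)^{-\frac13\sigma})$ as in \eqref{fuffetta1}--\eqref{fuffetta2}, so that $\|v\|_{H^\sigma_0}$ is bounded by $\|v\|_{\sigma,u}$ up to weakly smoothing errors, which are then absorbed exactly as in your last step (interpolation plus Young down to $H^{-3}_0$). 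The upper bound in the paper is a one-line duality estimate via Theorem \ref{azione}, with no composition at all.

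There is, however, a concrete gap in your quantitative claims: the regularity budget. Theorem \ref{compo} with gain $\rho$ requires the symbols to lie in $\Gamma^{m}_{s_0+\rho}$, i.e.\ you must spend $\rho$ derivatives of $x$-smoothness of $\mathtt d(x,u,u_x)$ and $(\partial^2_{z_1z_1}F)^{\frac23\sigma}$, hence (by \eqref{simboli_stimati}) you need control of $\|u\|_{H^{s_0+\rho+1}_0}$. Under the hypothesis of the lemma ($\|u\|_{H^{s_0}_0}\le r$, $s_0>1/2$, with a constant depending only on $r$) you cannot take $\rho=3$ in the first reduction, nor gain a full unit in the G\aa rding step: the admissible gain is only $\delta=s_0-\tfrac12$, which is exactly what the paper uses. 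This does not destroy your scheme — since your final absorption only needs errors of order $2\sigma-\delta$ for some $\delta>0$, you can rerun it with $\rho=\delta$ throughout — but as written the remainders of order $2\sigma-1$ and $2\sigma-3$ ``controlled by $C(\|u\|_{H^{s_0}_0})$'' are not justified. A second, more delicate point is the square root $\sqrt{\tilde p}$: for a merely nonnegative symbol of limited $x$-smoothness this is not a legitimate symbol in general. Here it can be salvaged because the variable part factors as $(\partial^2_{z_1z_1}F)^{\frac13+\frac23\sigma}|\xi|^{2\sigma}$ with $\partial^2_{z_1z_1}F\ge\mathfrak c$ by \eqref{ellit}, and because of the $\delta\langle\xi\rangle^{2\sigma}$ shift you mention (note also that $|\xi|^{2\sigma}\ge c\,\langle\xi\rangle^{2\sigma}$ fails near $\xi=0$, so the fixed elliptic part should be taken with $|\xi|^{2\sigma}$ or one must use that only frequencies bounded away from zero matter on $H^\sigma_0$); but these verifications, including Moser-type bounds for $\sqrt{\tilde p+\delta\langle\xi\rangle^{2\sigma}}$ in the symbol seminorms, are the heart of your lower bound and are only sketched. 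The paper's parametrix argument sidesteps both issues, at the price of being tied to the specific factored structure of the energy \eqref{energia-modi}; your approach, once the gains are reduced to $\delta$ and the square-root step is carried out, would be somewhat more general.
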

\begin{proof}
Concerning the  second inequality in \eqref{equivalenza}, we reason as follows. We have
\begin{equation*}\begin{aligned}
\|v\|_{\sigma,u}^2\leq& \|\opbw((\partial^2_{z_1z_1}F(x,u,u_x))^{\frac23 \s}|\xi|^{2\sigma})\opbw(\mathtt{d}(x,u,u_x))v\|_{H_0^{-\s}}\\
&\times\|\opbw(\mathtt{d}(x,u,u_x))v\|_{H^{\s}_0}\\
\leq& C_r\|v\|_{H^{\s}_0},
\end{aligned}\end{equation*}
where in the last inequality we used Theorem \ref{azione} and the fact that $\mathtt{d}$ is a symbol of order zero.
We focus on the first inequality in \eqref{equivalenza}. Let $\delta>0$ be such that $s_0-\delta=1/2$, then applying Theorem \ref{compo} with $s_0=\delta$ instead of $s_0$ and $\rho=\delta$, we have 
\begin{equation}\label{fuffetta1}
\begin{aligned}
&\opbw((\partial^2_{z_1z_1}F(x,u,u_x))^{\frac13 \s})\circ\opbw(|\xi|^{2\s})\circ\opbw((\partial^2_{z_1z_1}F(x,u,u_x))^{\frac13 \s})\\
&=\opbw(\opbw((\partial^2_{z_1z_1}F(x,u,u_x))^{\frac23 \s}|\xi|^{2\s})+\mathcal{R}^{2\s-\delta}(u),
\end{aligned}
\end{equation}
where
\begin{equation*}
\|\mathcal{R}^{2\s-\delta}(u)f\|_{H_0^{\bar\s-2\sigma+\delta}}\leq C(r,\bar\s)\|f\|_{H^{\bar\s}_0}.
\end{equation*}
Analogously we obtain
\begin{equation}\label{fuffetta2}\begin{aligned}
\opbw&((\partial^2_{z_1z_1}F(x,u,u_x))^{-\frac13 \s})\circ\opbw(\mathtt{d}^{-1}(x,u,u_x,u_xx))\circ\\
&\opbw((\partial^2_{z_1z_1}F(x,u,u_x))^{\frac13 \s})\circ\opbw(\mathtt{d}(x,u,u_x,u_xx))\\
&=1+R^{-\delta}(u),
\end{aligned}\end{equation}
where
\begin{equation*}
\|\mathcal{R}^{-\delta}(u)f\|_{H_0^{\bar \s}}\leq C(r,\bar\s)\|f\|_{H^{\bar\s-\delta}_0},
\end{equation*}
for any $f$ in $H^{\bar\s-\delta}_0$.
Therefore we have
\begin{equation*}
\begin{aligned}
&\|v\|_{H^{\sigma}_0}^2\stackrel{\eqref{fuffetta2}}{\lesssim} \\
&\|\opbw((\partial^2_{z_1z_1}F(x,u,u_x))^{-\frac13 \s})\opbw(\mathtt{d}^{-1})\opbw(\partial^2_{z_1z_1}F(x,u,u_x))^{\frac13 \s})\opbw(\mathtt{d})v\|_{H^{\s}_0}^2+\|v\|_{H^{\s-\delta}_0}^2\\
\leq& C_r(\|\opbw(\partial^2_{z_1z_1}F(x,u,u_x))^{\frac13 \s})\opbw(\mathtt{d})v\|_{H^{\s}_0}^2+\|v\|_{H^{\s-\delta}_0}^2)\\
\stackrel{\eqref{fuffetta1}}{=}& C_r(\|v\|_{u,\sigma}^2+\|v\|_{H^{\s-\delta/2}_0}^2+\|v\|_{H^{\s-\delta}_0}^2).
\end{aligned}\end{equation*}
Then by using the interpolation inequality $\|f\|_{H^{\theta s_1+(1-\theta)s_2}_0}\leq \|f\|_{H^{ s_1}_0}^{\theta}\|f\|_{H^{ s_2}_0}^{1-{\theta}}$ which is valid for any $s_1<s_2$, $\theta\in[0,1]$ and $f\in H^{s_2}$, we get (by means of the Young inequality $ab\leq p^{-1}a^p+q^{-1}b^q$, with $1/p+1/q=1$ and $p=2(\s+3)/\delta,$ $q=2(\sigma+3)/[2(\sigma+3)-\delta]$)
\begin{equation*}
\|v\|_{H^{\s-\delta/2}_0}^2\leq (\|v\|_{H^{-3}_0}^2)^{\frac{\delta}{2}\frac{1}{\s+3}}(\|v\|_{H^{\s}_0}^2)^{\frac{2(\s+3)-\delta}{2(\s+3)}}\leq \tfrac{\delta}{2(\s+3)}\|v\|_{H^{-3}_0}^2\tau^{-\frac{2(\s+3)}{\delta}}+\tfrac{2(\s+3)-\delta}{2(\s+3)}\tau^{\frac{2(\s+3)-\delta}{2(\s+3)}}\|v\|_{H^{\s}_0}^2,
\end{equation*}
for any $\tau>0$. Choosing $\tau$ small enough we conclude.\end{proof}
We are in position to prove Prop. \ref{energia}.
\begin{proof}[proof of Prop. \ref{energia}]
We take the derivative with respect to $t$ of the modified energy \eqref{energia-modi} along the solution $v^{\epsilon}$ of the equation \eqref{KdVparalin-smooth}. We have
\begin{align}
\tfrac{d}{dt}\|v^{\epsilon}\|_{\sigma,u}=&\langle\opbw\left(\tfrac{d}{dt}(\partial^2_{z_1z_1}F)^{\frac23 \s}|\xi|^{2\sigma}\right)\opbw\left(\mathtt{d}\right)v^{\epsilon},\opbw\left(\mathtt{d}\right)v^{\epsilon}\rangle_{L^2}\label{f_1}\\
+&\langle\opbw\left((\partial^2_{z_1z_1}F)^{\frac23 \s}|\xi|^{2\sigma}\right)\opbw\left(\tfrac{d}{dt}\mathtt{d}\right)v^{\epsilon},\opbw\left(\mathtt{d}\right)v^{\epsilon}\rangle_{L^2}\label{f_2}\\
+&\langle\opbw\left((\partial^2_{z_1z_1}F)^{\frac23 \s}|\xi|^{2\sigma}\right)\opbw\left(\mathtt{d}\right)\tfrac{d}{dt}v^{\epsilon},\opbw\left(\mathtt{d}\right)v^{\epsilon}\rangle_{L^2}\label{f_3}\\
+&\langle\opbw\left((\partial^2_{z_1z_1}F)^{\frac23 \s}|\xi|^{2\sigma}\right)\opbw\left(\mathtt{d}\right)v^{\epsilon},\opbw\left(\tfrac{d}{dt}\mathtt{d}\right)v^{\epsilon}\rangle_{L^2}\label{f_4}\\
+&\langle\opbw\left((\partial^2_{z_1z_1}F)^{\frac23 \s}|\xi|^{2\sigma}\right)\opbw\left(\mathtt{d}\right)v^{\epsilon},\opbw\left(\mathtt{d}\right)\tfrac{d}{dt}v^{\epsilon}\rangle_{L^2}\label{f_5}.
\end{align}
The most important term, where we have to see a cancellation, is the one given by \eqref{f_3}+\eqref{f_5}. Using the equation  \eqref{KdVparalin-smooth} we deduce that  \eqref{f_3}+\eqref{f_5} equals to 
\begin{align*}&\langle\opbw\left((\partial^2_{z_1z_1}F)^{\frac23 \s}|\xi|^{2\sigma}\right)\opbw\left(\mathtt{d}\right)v^{\epsilon},\opbw\left(\mathtt{d}\right)\opbw(\mathfrak{S})v^{\epsilon}\rangle_{L^2}+\\
&\langle\opbw\left((\partial^2_{z_1z_1}F)^{\frac23 \s}|\xi|^{2\sigma}\right)\opbw\left(\mathtt{d}\right)\opbw(\mathfrak S)v^{\epsilon},\opbw\left(\mathtt{d}\right)v^{\epsilon}\rangle_{L^2}\
\end{align*}
 where $\mathfrak{S}:=\mathfrak{S}^{\epsilon}$ has been defined in \eqref{simboloregolare}. We note that by using Theorem \ref{compo} with $\rho=3$ we obtain 
 $$\opbw(\mathtt{d}^{-1})\opbw(\mathtt{d})v^{\epsilon}=v^{\epsilon}+\mathcal{R}^{-3}(u)v^{\epsilon},$$
  where $\mathcal{R}^{-3}$ verifies \eqref{resto} with $\rho=3$. We plug this identity in the previous equation and we note that the contribution coming from $\mathcal{R}^{-3}$ is bounded by $C_r\|v^{\epsilon}\|_{H^{\s}_0}^2$ thanks to  Theorems \ref{compo}, \ref{azione}, to the Cauchy Schwartz inequality and to the assumption \eqref{teta-r}.We are left with 
 \begin{align*}&\langle\opbw\left((\partial^2_{z_1z_1}F)^{\frac23 \s}|\xi|^{2\sigma}\right)\opbw\left(\mathtt{d}\right)v^{\epsilon},\opbw\left(\mathtt{d}\right)\opbw(\mathfrak{S})\opbw(\mathtt{d}^{-1})\opbw(\mathtt{d})v^{\epsilon}\rangle_{L^2}+\\
&\langle\opbw\left((\partial^2_{z_1z_1}F)^{\frac23 \s}|\xi|^{2\sigma}\right)\opbw\left(\mathtt{d}\right)\opbw(\mathfrak S)\opbw(\mathtt{d}^{-1})\opbw(\mathtt{d})v^{\epsilon},\opbw\left(\mathtt{d}\right)v^{\epsilon}\rangle_{L^2}.
\end{align*}
At this point we are ready to use Lemma \ref{key} and we obtain that the previous quantity equals
 \begin{align*}&\langle\opbw\left((\partial^2_{z_1z_1}F)^{\frac23 \s}|\xi|^{2\sigma}\right)\opbw\left(\mathtt{d}\right)v^{\epsilon},\opbw\left(\chi^{\epsilon}\cdot(\partial_{z_1z_1}^2F(\ii\xi)^3+\tilde{a}_1(\ii\xi))\right)\opbw(\mathtt{d})v^{\epsilon}\rangle_{L^2}+\\
&\langle\opbw\left((\partial^2_{z_1z_1}F)^{\frac23 \s}|\xi|^{2\sigma}\right)\opbw\left(\chi^{\epsilon}\cdot(\partial_{z_1z_1}^2F(\ii\xi)^3+\tilde{a}_1(\ii\xi))\right)\opbw(\mathtt{d})v^{\epsilon},\opbw\left(\mathtt{d}\right)v^{\epsilon}\rangle_{L^2}.
\end{align*}
By using the skew self-adjoint character of the operators, we deduce that the main term to estimate is the commutator
\begin{equation}\label{commutatore}
\Big[\opbw\left((\partial^2_{z_1z_1}F)^{\frac23 \s}|\xi|^{2\sigma}\right), \opbw\left(\chi^{\epsilon}\cdot(\partial_{z_1z_1}^2F(\ii\xi)^3+\tilde{a}_1(\ii\xi))\right)\Big] \opbw(\mathtt{d})v^{\epsilon}.
\end{equation}
We start from the first summand. By using Theorem \ref{compo} and Remark \ref{simmetrie} with $\rho=3$ we obtain that 
\begin{align*}
C:=\Big[&\opbw\left((\partial^2_{z_1z_1}F)^{\frac23 \s}|\xi|^{2\sigma}\right), \opbw\left(\chi^{\epsilon}\cdot(\partial_{z_1z_1}^2F(\ii\xi)^3\right))\Big]\opbw(\mathtt{d})v^{\epsilon}=\\
&\frac{1}{\ii}\opbw\Big(\Big\{(\partial^2_{z_1z_1}F)^{\frac23 \s}|\xi|^{2\sigma},\chi^{\epsilon}\cdot\partial_{z_1z_1}^2F(\ii\xi)^3\Big\}\Big)\opbw(\mathtt{d})v^{\epsilon}+\mathcal{R}^0(u)\opbw(\mathtt{d})v^{\epsilon}.
\end{align*}
We recall that the cut-off function has been chosen as $\chi^{\epsilon}:=\chi(\epsilon\partial_{z_1z_1}^2F\xi^3)$, so that we obtain, by direct inspection that the Poisson bracket above equals to $0$.  Recalling that $\mathtt{d}$ is a symbol of order $0$, by using also Theorem \ref{azione} and the assumption \eqref{teta-r}, we may  obtain the bound $\langle C,\opbw(\mathtt{d})v^{\epsilon}\rangle\leq C_r\|v^{\epsilon}\|_{H^{\sigma}_0}^2$. \\
The second summand, i.e. the one coming from $\tilde{a}_1(\ii\xi)$ in \eqref{commutatore}, may be treated in a similar way: one uses Theorem \ref{compo} with $\rho=1$, at the first order the contribution is equal to zero, then the remainder is a bounded operator from $H^{2\sigma}_0$ to $H^{0}_0$ and one concludes as before, by using also the duality inequality  $\langle f,g\rangle_{L^2}\leq \|f\|_{H^{-\s}_0}{\|g\|_{H^{\s}_0}}$, bounding everything by  $C_r\|v^{\epsilon}\|_{H^{\sigma}_0}^2$. We are left with \eqref{f_1}, \eqref{f_2} and $\eqref{f_4}$. These terms are simpler, one just has to use the duality  inequality recalled above, then Theorem \ref{azione} and the fact that 
\begin{equation*}
|\tfrac{d}{dt} \mathtt{d}(x,u,u_x)|_{0,\s,{4}}, \quad |\tfrac{d}{dt}(\partial^2_{z_1z_1}F)^{\frac23 \s}|_{0,0,{4}}\leq C_{\Theta}\|u\|_{H^{\sigma}_0},
\end{equation*}
where we have used the first one of the assumptions \eqref{teta-r}. We eventually obtained $\tfrac{d}{dt}\|v^{\epsilon}\|_{\sigma,u}^2\leq C_{\Theta}\|v^{\epsilon}\|^2_{H^{\sigma}_0}$, integrating ove the time interval $[0,t)$ we obtain
\begin{equation*}
\|v^{\epsilon}\|_{\sigma,u(t)}^2\leq \|v^{\epsilon}(0)\|_{\sigma,u(0)}^2+C_{\Theta}\int_0^{t}\|v^{\epsilon}(\tau)\|_{H^{\sigma}_0}^2d\tau\leq C_r \|v^{\epsilon}(0)\|_{H^{\sigma}_0}^2+C_{\Theta}\int_0^{t}\|v^{\epsilon}(\tau)\|_{H^{\sigma}_0}^2d\tau
\end{equation*}
We now use \eqref{equivalenza} and the fact that $\|\partial_{t}v^{\epsilon}\|_{H^{-3}_0}\leq C_{\Theta}\|v^{\epsilon}\|_{H^{0}_{0}}\leq C_{\Theta} \|v^{\epsilon}\|_{H^{\sigma}_0}$ since $\sigma\geq 0.$
\end{proof}
We may now prove Prop. \ref{lineare}.
\begin{proof}[proof of Prop. \ref{lineare}]
Let $v_0$ be in $C^{\infty}(\T)$,  then the sequence $v^{\epsilon}$ given by Prop. \ref{energia} converges, thanks to the theorem of Ascoli-Arzel\`a, to a solution $v\in C^{0}([0,T);H^{\s}_0)$ of the equation \eqref{KdVparalin} with $R(t)=0$. When the initial condition is just in $H^{\s}_0$ one can use classical approximation arguments. The flow $\Phi(t)$ of the equation \eqref{KdVparalin} with $R(t)=0$ is well defined as a bounded operator form $H^{\s}_0$ to $H^{\s}_0$ and satisfies the estimate
\begin{equation*}
\|\Phi(t)v_0\|_{H^{\s}_0}\leq C_re^{C_{\Theta}t}\|v_0\|_{H^{\s}_0}.
\end{equation*}
One concludes by using the Duhamel formulation of \eqref{KdVparalin}.
\end{proof}
\section{Nonlinear local well posedness}
To build the  solutions of the nonlinear problem \eqref{KdVparalin}, we shall consider a classical quasi-linear iterative scheme, we follow the approach in \cite{FI1,FI2,BMM1, Mietka}.
 Set 
$$A(u):=\opbw\left(\partial_{z_1z_1}^2F(u,u_x)(\ii\xi)^3+\tfrac12 \tfrac{d}{dx}(\partial_{z_1z_1}^2F(u,u_x))(\ii\xi)^2+\tilde{a}_1(x,u,u_x,u_{xx},u_{xxx})(\ii\xi)\right)$$
and define
\begin{align*}
\mathcal{P}_1:& \quad \partial_t u_1= A(u_0)u_1;\\
\mathcal{P}_n:& \quad \partial_t u_n=A(u_{n-1})u_n+R(u_{n-1}), \quad n\geq 2.
\end{align*}
The proof of the main Theorem \ref{totale} is a  consequence of the next lemma. Owing to such a lemma one can follow closely the proof of Lemma 4.8 and Proposition 4.1 in \cite{BMM1} or the proof of Theorem 1.2 in \cite{FI2}(this is the classical Bona-Smith technique \cite{BSkdv}, but we followed the notation of \cite{BMM1, FI2}). We do not reproduce here such a proof.
\begin{lemma}
Let $s>\frac{1}{2}+4$. Set $r:=\|u_0\|_{s_0}$ and $s_0>1+1/2$. There exists a time $T:=T(\|u_0\|_{H^{s_0+3}})$ such that for any $n\in \mathbb{N}$ the following statements are true. \\
$(\mathcal{S}{0})_n$: There exists a unique solution $u_n$ of the problem $\mathcal{P}_n$ belonging to the space $C^{0}([0,T);H^{s})\cap C^{1}([0,T);H^{s-3})$.\\
$(\mathcal{S}{1})_n$: There exists a constant $\mathtt{C}_r\geq 1$ such that if $\Theta=4\mathtt{ C}_r\|u_0\|_{H^{s_0+3}_0}$ and $M=4 \mathtt{C}_r \|u_0\|_{H^{s}_0}$, for any $1\leq m\leq n$, for any $1\leq m\leq n$ we have 
\begin{align}
&\|u_m\|_{L^{\infty}H^{s_0}_0}\leq \mathtt{C}_r,\label{caz1}\\
&\|u_m\|_{L^{\infty}H^{s_0+3}_0}\leq \Theta,\quad \|\partial_tu_m\|_{L^{\infty}H^{s_0}_0}\leq \mathtt{C}_r\Theta,\label{caz2}\\
&\|u_m\|_{L^{\infty}H^{s}_0}\leq M,\quad \|\partial_tu_m\|_{L^{\infty}H^s_0}\leq \mathtt{C}
_rM.\label{caz3}
\end{align}
$(\mathcal{S}{2})_n$: For any $1\leq m\leq n$ we have 
\begin{equation}
\|u_1\|_{L^{\infty}H^{s_0}_0}\leq \mathtt{C}_r,\quad \|u_{m}-u_{m-1}\|_{L^{\infty}H^{s_0}_0}\leq2^{-m}\mathtt{C}_r, \quad m\geq 2.
\end{equation}
\end{lemma}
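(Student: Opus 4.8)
The plan is to establish $(\mathcal{S}0)_n$, $(\mathcal{S}1)_n$ and $(\mathcal{S}2)_n$ simultaneously by induction on $n$, following the quasi-linear iterative scheme of \cite{FI1,FI2,BMM1,Mietka}. The constants are to be fixed in the right order: one first picks $\mathtt{C}_r\ge1$, depending on $r=\|u_0\|_{H^{s_0}_0}$ and on $\|u_0\|_{H^{s_0+3}_0}$, large enough that all the inequalities below will close; one then sets $\Theta:=4\mathtt{C}_r\|u_0\|_{H^{s_0+3}_0}$ and $M:=4\mathtt{C}_r\|u_0\|_{H^s_0}$; finally one takes $T=T(\|u_0\|_{H^{s_0+3}_0})$ small enough, depending only on $\Theta$, that $e^{C_\Theta T}\le2$ and $e^{C_\Theta T}C_\Theta T\,C(\Theta)\le\tfrac12$, where $C(\cdot)$ is a non decreasing function absorbing the constants of \eqref{gromwall}, \eqref{R01}, \eqref{R03}, \eqref{paralip}, \eqref{simboli_stimati} and Theorem \ref{azione}. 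The decisive structural point — the very reason for introducing the modified energy \eqref{energia-modi} — is that $T$ is controlled by a \emph{low}-order norm of $u_0$ and is in particular independent of $M$ and of $n$; this is exactly what will let the iterates converge in $H^s_0$.

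For $n=1$ the problem $\mathcal{P}_1$ is \eqref{KdVparalin} with $R\equiv0$ and with $u$ replaced by the constant-in-time function $u_0\in H^s_0\subset H^{s_0+3}_0$ (which requires $s_0+3\le s$, compatible with $s_0>3/2$ since $s>4+\tfrac12$); this $u$ satisfies \eqref{teta-r} with the $\Theta,r$ fixed above, so Proposition \ref{lineare} gives the unique $u_1\in C^0([0,T);H^s_0)\cap C^1([0,T);H^{s-3}_0)$ — that is $(\mathcal{S}0)_1$ — and \eqref{gromwall} with $\sigma\in\{s_0,s_0+3,s\}$, together with $e^{C_\Theta T}\le2$ and $\mathtt{C}_r$ large, yields \eqref{caz1}--\eqref{caz3} for $m=1$; the time-derivative bounds are obtained by reading off $\partial_tu_1=A(u_0)u_1$, using that $A(u_0)$ has order three (so $\partial_tu_1$ loses exactly three derivatives) together with Theorem \ref{azione} and \eqref{simboli_stimati}, and $(\mathcal{S}2)_1$ is just \eqref{caz1} for $m=1$. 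For the inductive step assume the three statements up to $n$. By $(\mathcal{S}1)_n$ the iterate $u_n$ satisfies \eqref{teta-r} (with $\Theta,r$ replaced by suitable multiples of $\Theta,\mathtt{C}_r$) and by \eqref{R01} one has $R(u_n)\in C^0([0,T);H^\sigma_0)$ for $\sigma\in\{s_0,s_0+3,s\}$, so Proposition \ref{lineare} applied to $\mathcal{P}_{n+1}$ with $u=u_n$ produces a unique $u_{n+1}\in C^0([0,T);H^s_0)\cap C^1([0,T);H^{s-3}_0)$, i.e. $(\mathcal{S}0)_{n+1}$. Statement $(\mathcal{S}1)_{n+1}$ then follows from \eqref{gromwall},
\begin{equation*}
\|u_{n+1}\|_{L^\infty H^\sigma_0}\le e^{C_\Theta T}\big(C_r\|u_0\|_{H^\sigma_0}+C_\Theta T\,\|R(u_n)\|_{L^\infty H^\sigma_0}\big),\qquad\sigma\in\{s_0,s_0+3,s\},
\end{equation*}
and the bound $\|R(u_n)\|_{L^\infty H^\sigma_0}\le C(\|u_n\|_{L^\infty H^{s_0+3}_0})\|u_n\|_{L^\infty H^\sigma_0}$ of \eqref{R01}, which by the inductive hypotheses is $\le C(\Theta)\mathtt{C}_r$, $C(\Theta)\Theta$, $C(\Theta)M$ for $\sigma=s_0,s_0+3,s$ respectively; the choices $e^{C_\Theta T}\le2$, $e^{C_\Theta T}C_\Theta T\,C(\Theta)\le\tfrac12$ and $\mathtt{C}_r$ large then close \eqref{caz1}--\eqref{caz3}, the $\partial_t$-bounds once more coming from $\partial_tu_{n+1}=A(u_n)u_{n+1}+R(u_n)$, Theorem \ref{azione} and \eqref{simboli_stimati}. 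It is worth noting that the high-norm bound \eqref{caz3} closes precisely because the constant $C(\Theta)$ multiplying the nonlinear term multiplies $M$ on both sides, so $M$ cancels out.

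It remains to prove the contraction statement $(\mathcal{S}2)_{n+1}$. Put $w:=u_{n+1}-u_n$; since both iterates have initial datum $u_0$, one has $w(0)=0$, and $w$ solves the linear equation $\partial_tw=A(u_n)w+g_n$ with
\begin{equation*}
g_n:=\big(A(u_n)-A(u_{n-1})\big)u_n+\big(R(u_n)-R(u_{n-1})\big)\quad(n\ge2),\qquad g_1:=\big(A(u_1)-A(u_0)\big)u_1+R(u_1).
\end{equation*}
By the Lipschitz estimate \eqref{paralip} on the paradifferential operator (with $\sigma=s_0$, using $\|u_n\|_{L^\infty H^{s_0+3}_0}\le\Theta$) and by \eqref{R03} one gets $\|g_n\|_{L^\infty H^{s_0}_0}\le C(\Theta)\|u_n-u_{n-1}\|_{L^\infty H^{s_0}_0}$ for $n\ge2$, while $\|g_1\|_{L^\infty H^{s_0}_0}\le C(\Theta)\big(\|u_1-u_0\|_{L^\infty H^{s_0}_0}+\mathtt{C}_r\big)\le C(\Theta)\mathtt{C}_r(1+T\Theta)$ after bounding $\|u_1-u_0\|_{L^\infty H^{s_0}_0}\le T\|\partial_tu_1\|_{L^\infty H^{s_0}_0}$. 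Applying \eqref{gromwall} with $\sigma=s_0$ and $v_0=0$ gives $\|w\|_{L^\infty H^{s_0}_0}\le e^{C_\Theta T}C_\Theta T\|g_n\|_{L^\infty H^{s_0}_0}$; shrinking $T$ one last time so that $e^{C_\Theta T}C_\Theta T\,C(\Theta)\le\tfrac12$ (and $\le\tfrac14$, with $T\le1$, for the $n=1$ step) yields $\|u_2-u_1\|_{L^\infty H^{s_0}_0}\le2^{-2}\mathtt{C}_r$ and then, inductively, $\|u_{n+1}-u_n\|_{L^\infty H^{s_0}_0}\le\tfrac12\|u_n-u_{n-1}\|_{L^\infty H^{s_0}_0}\le2^{-(n+1)}\mathtt{C}_r$.

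I expect the only genuine difficulty to be the bookkeeping of the constants: one must verify that $\mathtt{C}_r$, $\Theta$, $M$ and $T$ can indeed be chosen, in this order and with the stated properties — in particular with $T$ depending on $\|u_0\|_{H^{s_0+3}_0}$ alone and not on $M$ nor on $n$ — which is exactly the content of Lemma~4.8 and Proposition~4.1 of \cite{BMM1} (see also Theorem~1.2 of \cite{FI2}); everything else is a routine application of Proposition \ref{lineare}, Theorem \ref{azione} and the estimates \eqref{R01}, \eqref{R03}, \eqref{paralip}. Granted this lemma, Theorem \ref{totale} follows by the Bona--Smith approximation argument of \cite{BSkdv} carried out as in \cite{BMM1,FI2}.
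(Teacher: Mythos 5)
Your proposal is correct and follows essentially the same route as the paper: induction on $n$, Proposition \ref{lineare} and the energy estimate \eqref{gromwall} for existence and the bounds \eqref{caz1}--\eqref{caz3}, the equation itself plus Theorem \ref{azione} for the $\partial_t$ bounds, and the difference equation for $u_{n+1}-u_n$ with \eqref{paralip} and the Lipschitz remainder estimates to get the $2^{-m}$ contraction, with $T$ chosen small depending only on $\Theta$, hence on $\|u_0\|_{H^{s_0+3}_0}$. The only (harmless) deviations are that you invoke \eqref{R03} where the paper uses \eqref{R02}, and that you spell out the $m=2$ step of the contraction (the extra $R(u_1)$ term in $g_1$), which the paper leaves implicit.
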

\begin{proof}
We proceed by induction over $n$. We prove $(S0)_1$, by using Proposition \ref{lineare} with $R(t)=0$, $u\rightsquigarrow u_0$ and $v\rightsquigarrow u_1$; we obtain a solution $u_1$ which is defined on every interval $[0,T)$ and verifies the estimate $\|u_1\|_{L^{\infty}_TH^{\s}_0}\leq e^{T\|u_0\|_{H^{\s}_0}}C_r\|u_0\|_{H^{\s}_0}$, $\s\geq 0$ with $C_r>0$ given by Proposition \ref{lineare}. $(S1)_1$ is a consequence of the previous estimate applied with $\s=s_0$ for \eqref{caz1} and \eqref{caz2}, with $\s=s$ for \eqref{caz3}. In order to obtain the seconds in \eqref{caz2} and \eqref{caz3},  one has to fix $T\leq 1/\|u_0\|_{s_0}$  and  use the equation for $u_1$ together with Theorem \ref{azione} and one finds $M$ which depends on $\|u_0\|_{H^s_0}$ and $\Theta$ which depends on $\|u_0\|_{H^{s_0}_0}$ and on a constant $C_r$ depending only on $\|u_0\|_{s_0}$.  $(S2)_1$ is trivial. 

We assume that $(SJ)_{n-1}$ holds true for any $J=0,1,2$ and we prove that $(SJ)_n$.\\
Owing to $(S0)_{n-1}$ and $(S1)_{n-1}$, the $(S0)_n$ is a direct consequence of Proposition \ref{lineare}. 
Let us prove \eqref{caz1} with $m=n$. By using \eqref{gromwall} applied to the problem solved by $u_n$, the estimate \eqref{R01} with $\s=s_0$, \eqref{caz1} with $m=n-1$ and $(S0)_{n-1}$, we obtain $\|u_n\|_{L^{\infty} H^{s_0}_0}\leq e^{C_{\Theta}T}(C_r\|u_0\|_{H^{s_0}_0}+\mathtt{C}_rC_{\Theta}T)$, the thesis follows by choosing $e^{C_{\Theta}T}C_\Theta T<1/4$ and $\mathtt{C}_r\geq \|u_0\|_{H^{s_0}_0}/4C_r$.\\
We prove the first in \eqref{caz2}. Applying \eqref{gromwall} with $\sigma=s_0+3$ and $v\rightsquigarrow u_n$, $u\rightsquigarrow u_{n-1}$, the estimate on the remainder \eqref{R01} and using $(S1)_{n-1}$ we obtain $\|u_n\|_{s_0+3}\leq e^{C_{\Theta}T}\mathtt{C}_r\|u_0\|_{s_0+3}+\Theta C_{\Theta}Te^{C_{\Theta}T}$, fixing $T$ small enough such that $TC_{\Theta}\leq1$ and $TC_{\Theta}e^{C_{\Theta}T}\leq 1/4,$ the thesis follows from the definition $\Theta:=4\mathtt{C}_r\|u_0\|_{H^{s_0}_0}$. The second in \eqref{caz2} may be proven by using the equation for $u_n$ and the second in \eqref{R01}
\begin{equation*}
 \|\partial_t u_n\|_{H^{s_0}_0}\leq\|A(u_{n-1})u_n\|_{H^{s_0}_0}+\|R(u_{n-1})\|_{H^{s_0}_0}\leq C(\|u_{n-1}\|_{H^{s_0}_0})\|u_{n}\|_{H^{s_0+3}_0}\leq \mathtt{C}_r\Theta.
\end{equation*}
The \eqref{caz3} is similar. We prove $(S2)_n$, we write the equation solved by $v_n=u_n-u_{n-1}$
\begin{equation*}
\partial_t v_n=A(u_{n-1})v_n+f_n,\quad f_{n}=\big[A(u_{n-1})-A(u_{n-2})\big]u_{n-1}+R(u_{n-1})-R(u_{n-2}).
\end{equation*}
By using \eqref{R02}, \eqref{paralip} and the $(S2)_{n-1}$ we may prove that $\|f_n\|_{H^{s_0}_0}\leq C_{\Theta}\|v_{n-1}\|_{H^{s_0}_0}$. We apply again Proposition \ref{lineare} with $\s=s_0$ and we find $\|v_n\|_{H^{s_0}_0}\leq C_{\Theta}Te^{C_{\Theta}T}\|v_{n-1}\|_{H^{s_0}_0}$, as $T$ has been chosen small enough we conclude the proof.
\end{proof}
\def\cprime{$'$}

\end{document}